\documentclass[a4paper,11pt]{amsart}

%Tama\~{n}os

\linespread{1.1}

\oddsidemargin=-0cm

\evensidemargin=-0cm

\topmargin=-0cm

\textwidth=16cm

\textheight=25cm

\parskip 5pt

\usepackage{amssymb,amscd,amsthm,mathrsfs}
\usepackage[ansinew]{inputenc}
\usepackage[centertags]{amsmath}
\usepackage{graphicx,psfrag}

%Letras
\newcommand{\la}{\lambda}
\newcommand{\La}{\Lambda}
\newcommand{\ep}{\epsilon}

\newcommand{\R}{\mbox{$\mathbb{R}$}}

\newcommand{\Li}{{\tilde{L}}}

\newcommand{\Di}{Di\!f\!f}

 \def\RR{{\mathbb R}}  \def\TT{{\mathbb T}}
   
 \def\ZZ{{\mathbb Z}}

\def\cA{\mathcal{A}}    
\def\cB{\mathcal{B}}    \def\cT{\mathcal{T}}
\def\cC{\mathcal{C}}   \def\cO{\mathcal{O}} \def\cU{\mathcal{U}}
\def\cD{\mathcal{D}}  \def\cJ{\mathcal{J}} \def\cP{\mathcal{P}} \def\cV{\mathcal{V}}
    \def\cW{\mathcal{W}}
\def\cF{\mathcal{F}}   \def\cR{\mathcal{R}}

%Teoremas

\newtheorem*{teo*}{Theorem}

\newtheorem{thm}{Theorem}[section]
\newtheorem{conj}{Conjecture}

\newtheorem{cor}{Corollary}[section]
\newtheorem{lem}{Lemma}[section]
\newtheorem{prop}{Proposition}[section]
\newtheorem{prob}{Problem}
\newcommand{\bi}{\begin{itemize}}
\newcommand{\ei}{\end{itemize}}

\theoremstyle{definition}
\newtheorem{defi}{Definition}[section]
\theoremstyle{remark}
\newtheorem{rem}[]{Remark}[section]

%Pruebas y fin de pruebas

% Algunos comandos

% MARGENES y otras cosas para el formato

 %Este hace un margen
 %Este es por si sale del lado izquierdo

\author{Mart\'{i}n Sambarino}
\address{CMAT, Facultad de Ciencias, Universidad de la Rep\'ublica del  Uruguay} \address{Igua 4225. Montevideo, Uruguay.}
\email{samba@cmat.edu.uy}

\title[Dominated Splitting]{A (short) survey on Dominated Splitting}
%\thanks{The autor was partially supported by}

\begin{document}

\begin{abstract}
We present here the concept of Dominated Splitting and give an
account of some important results on its dynamics.

\end{abstract}

\maketitle
\begin{flushright}
\textit{Dedicated to Ricardo Ma\~n\'e (1948-1995)}
\end{flushright}

\section{Introduction}

Since the invention of Calculus a basic tool in order to get
information on a  smooth object is to look to its ``linear
approximation''. Let me start with two very simple examples to give
a rough idea of the purpose of the present paper.

First, as we learn during the first courses, when we
have a smooth function of the real line or the interval, if we know that at some point $x,$  the derivative $f'(x)$
has a definite sign,  then we know that the
function is strictly monotonous  on a neighborhood of
the point $x.$ Notice that any map $g$ smoothly close to $f$  will
have $g'(x)$ of the same sign. On the other hand if $f'(x)=0$ then
we cannot ensure what the behavior of $f$ around $x$ will be.
However, if we know that on a $C^1$-neighborhood of $f,$ all maps have the same behavior around a uniform neighborhood of
$x,$  then  we conclude that the
derivative of $f$ has a definite sign at $x$.

The second example I would like to mention comes from the study of
autonomous differential equations. Let $x'=f(x)$ be a differential
equation where $f:\Omega\to \RR^n$ is a smooth map on an open region
$\Omega.$ Suppose that we have an isolated equilibrium point $x_0$
and let $A=Df_{x_0}.$ We know that if all the eigenvalues of $A$
have negative real part then the equilibrium point $x_0$ is
(asymptotically) stable. Notice that if $g$ smoothly close to
$f,$ the corresponding differential equation will have an
asymptotically stable equilibrium point near $x_0.$ On the other
hand, if all eigenvalues of $A$ have non positive real part (but
there is one with zero real part) then we cannot ensure what the
behavior around $x_0$ will be. However, if we know that for any
$g$ on a $C^1$ neighborhood of  $f,$ the corresponding differential equation  has a unique equilibrium point in a
uniform neighborhood of $x_0$ and it is asymptotically stable, then
all the eigenvalues of $A$ have negative real part.

I mentioned the above examples to support the following idea (or
principle) in smooth dynamics, which is somehow the leitmotif of this
survey: \textit{a ``stable'' structure of the linear approximation
should allow one to describe the dynamics (or at least should impose
restrictions on it) and a ``stable'' dynamical phenomena implies some ``stable''
structure on the linear approximation}.

This principle has been beautifully accomplished in the so called
\textit{hyperbolic theory} started by D.V. Anosov and S. Smale in
the sixties and an endless list of contributors (Bowen, Franks,
Ma\~{n}\'e, Newhouse, Palis, Pugh,  Shub just to mention a few). Let us
explain it in a very informal way.  A smooth dynamical system
$f:M\to M$ is hyperbolic (on a compact invariant set $\La$) if
the tangent map $Df$ has a hyperbolic structure over the set,
meaning that the tangent bundle splits into two subbundles $T_\La=E^s\oplus E^u$ invariant
under $Df$ and the action of $Df$ on this subbundles has a uniform
behavior (uniformly contracting on $E^s$  and uniformly expanding on
$E^u$). A precise definition is given in the next section.
This hyperbolic structure of $Df$ has strong implications on the
dynamics of $f$. Locally, the dynamics splits into two
``directions'' and  in one direction points get exponentially close
by forward iteration and on the other one points get exponentially
close by backward iteration. These facts allow to successfully
describe the dynamics (see for instance \cite{S},
\cite{Sh}, \cite{R}, \cite{KH}). On the other hand this hyperbolic structure
cannot be destroyed by small smooth perturbation and ``stability''
can be get from here. Furthermore, (structurally) stable dynamical
systems are hyperbolic (a remarkable result by Ma\~n\'e \cite{M1}).

Weaker forms of hyperbolicity appeared in the literature (in the 70's): Partial Hyperbolicity and Dominated Splitting.
These (non hyperbolic) structures on the tangent map
$Df$ of a dynamical system $f:M\to M$  can not be destroyed by
small perturbations. The weaker one having this property is Dominated Splitting. Roughly speaking,
we say that an invariant set has Dominated Splitting if the tangent
bundle over the set splits into two invariant subbundles which are
invariant under $Df$ and although we do not know (a priori) that the
action of $Df$ on these subbundles has a uniform behavior
(contracting or expanding), we know that the action on one
\textit{dominates} the action on the other one (see Definition
\ref{dDS}). This ``domination'' prevents the structure to be
destroyed by small perturbations. This dominated splitting structure
may allow us to think that the dynamics of $f$ splits into two
directions. Nevertheless, since the action of $Df$ on the invariant
subbundles might not have a unform behavior we can not get
information on how the dynamics on these ``direction'' would be. And
so, trying to get dynamical information from the dominated splitting
seems hopeless.

Can we describe the dynamics of a set having Dominated Splitting?
After a first glance, the question is very naive since any dynamical
system $f:M\to M$ can be embedded into another one having Dominated
Splitting (multiplying by other systems having strong
contraction/expansion), though in this case the dynamics lives in low dimensional submanifold. We have to take a deeper look to see which are the
right questions to ask. We will see through the following pages
there are many results in this direction though there is still a
long way to a comprehensive understanding of it.

Dominated Splitting appeared in the literature as a \textit{tool} and during the last twenty years there had been a growing interest on it and it fits with many different subjects in dynamics.

It plays a central role in the  understanding of global dynamics from a ($C^1$) generic viewpoint. The Palis' conjecture (\cite{Pa},\cite{PaT}) motivated much of the work in this area. The state of the art regarding this conjecture is the remarkable result by S. Crovisier and E. Pujals (\cite{CP}). For a global view on generic $C^1$ dynamics I recommend the survey by C. Bonatti (\cite{B}) and the comprehensive work by Crovisier in \cite{C1}. Robust dynamical phenomena (in the $C^1$) topology, as robust transitivity for example, are well described in terms of Dominated Splitting as well and there are some surveys that the interested reader could consult (\cite{PS4}, \cite{P1}, \cite{P2} and also \cite{B} and the book \cite{BDV}).

It plays also a major role in studying $C^1$ generic diffeomorphisms preserving a volume form that started with the seminal work of R. Ma\~n\'e (\cite{M6}). As a reference to the subject see the paper by Avila and Bochi \cite{AB}. Recently, Avila-Crovisier-Wilkinson announced a landmark result in this area and the paper will be available soon \cite{ACW}. It is also present in the study of \textit{stable ergodicity} for diffeomorphisms preserving a volume form (see \cite{ACW}, \cite{T1}, \cite{Wi}).

The extension of the ergodic theory \cite{Bo} of hyperbolic systems (as the existence of SRB or physical measures)  to a wider class it has been the interest of researchers for a long time. Systems having Dominated Splitting are in the class where successful extensions has taken place (see for instance \cite{BDV} and references therein).

On the other hand, the (robust) lack of dominated splitting has wild dynamical consequences (see \cite{B}, \cite{BD1}, \cite{BD2} and \cite{BCS}). For instance, in \cite{BCS}, it is proven the Pesin Theory (\cite{Pe} and \cite{BaPe}) dramatically fail in the $C^1$ topology: there are hyperbolic measures where the stable and unstable manifolds of any point in the support of the measure are trivial. Nevertheless, much of Pesin theory on hyperbolic measures can be recovered in the $C^1$ category provided the Oseledet's splitting is a dominated splitting, as claimed in \cite{M6}. For instance in \cite{ABC} it is proven the existence of stable and unstable manifolds and a version of Katok's closing lemma \cite{K} (an early adaptation in two dimensions appeared in \cite{Ga2}); in \cite{G} the Katok's horseshoe construction (\cite{K}) is extended to the $C^1$ case; and Pesin's entropy formula has been extended as well (see  \cite{CCE}, \cite{ST} and \cite{T2}).

Likewise, Dominated Splitting has to do with flows, specially in the understanding of the Lorenz attractor and the theory of singular hyperbolic flows (see for instance \cite{MPP} and the book \cite{AP}).

A complete survey on the subject will result in a very long
paper or just in a nonsense collection of results. To avoid both I
tried to focus on some aspects and results to give the flavor of the
subject. What is this survey about? It is about the dynamical consequences we may extract of a set with Dominated Splitting just from the structure itself (without assuming other hypothesis like being far from homoclinic tangencies, heterodimensional cycles, or preserving a measure, etc). And so many important contributions (besides the ones mentioned in the above paragraphs) might not appear here. I
apologize for this in advance.

\textit{Acknowledgements:} I wish to thanks Sylvain Crovisier, Enrique Pujals and specially Rafael Potrie for reading a preliminary draft and for their comments, corrections and suggestions.

\section{Definitions and examples}

A dynamical systems will be  a diffeomorphism $f:M\to M$ where $M$ is
compact riemannian manifold without boundary. We denote by
$\Di^r(M)$ the space of $C^r$ diffeomorphisms endowed with the $C^r$
topology. We say that $f$ is $C^r$ generic if belongs to a $G_\delta$ set (in the Baire sense).

An $\ep$-chain form $x$ to $y$ is a finite sequence $x_0=x,x_1,...,x_n=y$ such that $dist(f(x_i),x_{i+1})<\ep,$ for  $i=0,...,n-1.$ The chain recurrent set $\cR(f)$ is the set of points $x\in M$ such that there exists an $\ep$-chain from $x$ to itself for any $\ep>0.$ A set $\La\subset\cR(f)$ is chain-transitive if for any $x,y\in\La$ there is an $\ep$-chain from $x$ to $y.$ A chain-recurrent class is a chain-transitive set which is maximal for the inclusion. Chain recurrent classes are compact, invariant and pairwise disjoint. The chain recurrent set is were the dynamic takes place (see \cite{Co}). Other central notions are the non-wandering set $\Omega(f),$ the limit set $L(f)$ and set of periodic points $Per(f).$ It is always true that $\overline{Per(f)}\subset L(f)\subset\Omega(f)\subset\cR(f)$ and, from the Connecting Lemma for pseudo-orbits \cite{BC}, they are $C^1$ generically equal to each other.

\begin{defi}\label{dHyp}
A compact invariant set $\La$ of a dynamical system $f:M\to M$ is
hyperbolic provided the tangent bundle over $\La$ splits into two
subbundles $T_\La M=E^s\oplus E^u$ such that
\begin{itemize}
\item $E^s$ and $E^u$ are invariant by $Df,$ that is $Df(E^s_x)=E^s_{f(x)}$ and $Df(E^u_x)=E^u_{f(x)}.$
\item There exists $C>0$ and $0<\la<1$ such that for any $x\in \La$
$$\|Df^n_{/E^s_x}\|\le C\la^n,\,\, n\ge 0\,\,\,\,\mbox{ and } \,\,\,\|Df^{-n}_{/E^u_x}\|\le C\la^n,\,\,\, n\ge 0.$$
\end{itemize}
\end{defi}

A periodic point $p$ is hyperbolic if its orbit $\cO(p)$ is a hyperbolic set. The index of $p$ is de dimension of $E^s_p.$ The homoclinic class of a hyperbolic periodic point is the closure of the set of transversal intersection between the stable and unstable manifolds of $\cO(p).$

Let's now recall the meaning of Dominated Splitting. This concept was
introduced by Ma\~{n}\'e in \cite{M2},\cite{M3}. Apparently this notion
already appeared in a letter from Ma\~n\'e to J. Palis when he was an
undergraduate student in Uruguay and before the celebrated conference in
Salvador de Bahia in 1971. In this letter Ma\~n\'e claimed he had a
proof of the stability conjecture, something he would achieve almost
twenty years later! Also, Pliss \cite{Pl1} and Liao \cite{L1}
used this notion independently without named it. And in \cite{HPS} this
notion is also hidden in the definition of \textit{eventually relatively
normal hyperbolicity}.

\begin{defi}\label{dDS}
Let $\La$ be an invariant set of $f:M\to M.$ We say that $\La$ has
Dominated Splitting provided the tangent bundle over $\La$ splits
into two subbundles $T_\La M=E\oplus F$ such that
\begin{enumerate}\renewcommand{\theenumi}{\roman{enumi}}
\item $E$ and $F$ are invariant by $Df.$
\item\label{e.cont} The subbundles $E$ and $F$ are continuous, i.e., $E_x$ and $F_x$ vary continuously with $x\in\La.$
\item There exist $C>0$ and $0<\la<1$ such that for any $x\in\La$
\begin{equation}\label{e.ds}
\|Df^n_{/E_x}\|\|Df^{-n}_{F_{f^n(x)}}\|\le C\la^n,\,\, n\ge 0.
\end{equation}
\end{enumerate}
\end{defi}

A way to understand the above definition is that \textit{any
direction not contained in the subbundle $E$ converges exponentially
fast to the direction $F$ under iteration of $Df.$}

 Condition \eqref{e.ds} is equivalent to the following:
\begin{equation}
\mbox{There exits $m>0$ such that }
\|Df^m_{/E_x}\|\|Df^{-m}_{F_{f^m(x)}}\|\le\frac{1}{2}
\end{equation}
and can be written also in the following form:
\begin{equation}
\mbox{For any $v_E\in E_x-\{0\}$ and $V_F\in F_x-\{0\}:$ }\,\,\,\,\,
\frac{\|Df^m_x v_E\|}{\|v_E\|}\le \frac{1}{2}\frac{\|Df^m_x
v_F\|}{\|v_F\|}.
\end{equation}
Also, recalling that the co-norm or mininorm of a linear map $A$ is
$m(A)=\|A^{-1}\|^{-1}$ the above can be written as
\begin{equation}
\|Df^m_{/E_x}\|\le \frac{1}{2}m(Df^m_{/F_x})
\end{equation}

The above suggest the terminology: the bundle $E$ \textit{dominates}
the bundle $F,$ and sometimes it is written as $E\prec F.$ Condition
\eqref{e.ds} and the equivalent ones also imply that to ``see''
the domination we might have to wait some iterations.

Notice that the Dominated Splitting does not depends on the
riemannian metric, that is, a set having dominated splitting still
it has it no matter if we change the metric on the manifold.
Nevertheless, the constants $C$ and $\la$ above do depend on the
metric. We can always find a riemannian metric on the
manifold so that the constant $C$ above is equal to $1,$ in other
words we can ``see'' the domination in the first step. This is a
result by N. Gourmelon \cite{Go1}.

Let's see now an elementary property:

\begin{prop}\label{p.dominated}
Assume that $\La$ has a Dominated Splitting $T_\La M=E\oplus F.$
Then this Dominated Splitting can be extended to the closure. Also,
condition \eqref{e.cont} is equivalent to the following: the maps
$x\to dim(E_x)$ and $x\to dim(F_x)$ for $x\in\La$ are locally
constant. The angle between the subspaces $E_x$ and $F_x$ is bounded
away from zero.
\end{prop}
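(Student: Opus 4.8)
The plan is to establish the three assertions in order, each following from the domination inequality \eqref{e.ds} applied to sequences of points in $\La$ (or limits thereof).

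First, for the extension to the closure: given $x\in\overline{\La}$, pick a sequence $x_k\in\La$ with $x_k\to x$. Passing to a subsequence, the subspaces $E_{x_k}$ converge in the appropriate Grassmannian to some subspace $E_x$, and likewise $F_{x_k}\to F_x$; here one first needs that the dimensions $\dim E_{x_k}$ and $\dim F_{x_k}$ are eventually constant, which I would obtain as part of the argument below (locally constant dimension on $\La$ forces a well-defined pair of dimensions near $x$). The inequality \eqref{e.ds}, being a closed condition on $(x, v_E, v_F)$ with uniform constants $C,\la$, passes to the limit, so $\|Df^n_{/E_x}\|\,\|Df^{-n}_{/F_{f^n(x)}}\|\le C\la^n$ for all $n\ge 0$. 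In particular $E_x\cap F_x=\{0\}$ (a nonzero common vector would have to both contract relative to itself and dominate itself, impossible for $n$ large), and a dimension count using $\dim E_{x_k}+\dim F_{x_k}=\dim M$ gives $E_x\oplus F_x = T_xM$. Invariance $Df(E_x)=E_{f(x)}$ passes to the limit by continuity of $Df$ and of $f$. One must check the limit is independent of the chosen subsequence: any two limit splittings at $x$ both satisfy \eqref{e.ds}, and a standard uniqueness argument for dominated splittings (a dominating-dominated pair of invariant splittings must coincide, since otherwise one exhibits a vector violating domination) shows they agree.

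Second, the equivalence of continuity \eqref{e.cont} with local constancy of $x\mapsto\dim E_x$ and $x\mapsto\dim F_x$: continuity of $x\mapsto E_x$ as a map into the Grassmann bundle forces the dimension (an integer-valued, hence locally constant along connected pieces) to be locally constant, which is the easy direction. Conversely, assume the dimension functions are locally constant. Then on a neighborhood of any $x_0\in\La$ all $E_x$ lie in a fixed Grassmannian $\mathrm{Gr}_k(TM)$. Take $x_k\to x_0$ in $\La$; by compactness of the Grassmannian extract a convergent subsequence $E_{x_k}\to E'$. As above, $E'\oplus F'=T_{x_0}M$ with $F'$ a subsequential limit of $F_{x_k}$, and the pair $(E',F')$ is $Df$-invariant and satisfies the domination inequality; by the uniqueness of dominated splittings $(E',F')=(E_{x_0},F_{x_0})$. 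Since every subsequence has a further subsequence converging to $E_{x_0}$, the full sequence converges, giving continuity.

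Third, the angle bound: suppose not; then there are $x_k\in\La$ and unit vectors $v_k\in E_{x_k}$, $w_k\in F_{x_k}$ with $\angle(v_k,w_k)\to 0$. Passing to a subsequence $x_k\to x\in\overline{\La}$ (using the extension from the first part), $v_k\to v\in E_x$ and $w_k\to w\in F_x$ with $v=\pm w$, contradicting $E_x\cap F_x=\{0\}$. Alternatively one can make the constant explicit: the domination \eqref{e.ds} for a fixed $m$ with $\|Df^m_{/E_x}\|\le\tfrac12 m(Df^m_{/F_x})$ bounds the angle below in terms of $m$ and $\sup\|Df^{\pm m}\|$, since a vector nearly parallel to both $E_x$ and $F_x$ would be stretched both like an $E$-vector and like an $F$-vector. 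I expect the main obstacle to be the uniqueness statement for dominated splittings that is used (implicitly) in the first two parts — namely that at a single point there is at most one $Df$-orbit-invariant splitting satisfying the domination estimate with given constants. This is where the dynamical content sits: one must push a would-be discrepancy forward under $Df^n$ and use \eqref{e.ds} to force exponential collapse onto $F$, contradicting invariance of the rival $E$. Everything else is compactness of Grassmannians and continuity of $Df$.
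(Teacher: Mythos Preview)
Your proposal is correct and takes essentially the same approach as the paper's sketch: extract subsequential limits in the Grassmannian, check that they inherit \eqref{e.ds} and give a direct sum, and then establish uniqueness of the limiting splitting via a growth argument to obtain continuity, the extension to the closure, and the angle bound. The paper makes the uniqueness step explicit by showing that a vector $v\in\tilde E_x\setminus E_x$ forces $\|Df^n_{/\tilde E_x}\|/\|Df^n_{/E_x}\|\to\infty$ and then swapping the roles of $E$ and $\tilde E$ for a contradiction---exactly the ``push forward and use domination'' mechanism you anticipate in your final paragraph.
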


\begin{proof}[Sketch of proof]
Let $x_n\to x$ be a sequence in $\La$ such  that
$E_{x_n}$ and $F_{x_n}$ converges to subspaces $\tilde{E}_x$ and
$\tilde{F}_x.$ By defining $\tilde{E}_{f^n(x)}=Df_x(\tilde{E}_x)$
and similarly $\tilde{F}_{f^n(x)}$ we have that $\tilde{E}$ and
$\tilde{F}$ satisfy \eqref{e.ds}. In particular
$T_xM=\tilde{E_x}\oplus \tilde{F}_x.$ Let $y_n$ be also a sequence
in $\La$ such that $y_n\to x$ and that $E_{y_n}$ and $F_{y_n}$
converges to subspaces denoted $E_x$ and $F_x.$ Notice that
$dimE_x=dim\tilde{E}_x$ and $dimF_x=dim\tilde{F}_x.$ We would like
to show that $E_x=\tilde{E}_x.$ Otherwise, let $v\in \tilde{E},
\|v\|=1$ and   $v=v_E+v_F$ with $v_F\neq 0.$ Then
\begin{eqnarray*}
\|Df^n_{/\tilde{E}(x)}\| &\ge& \|Df^nv\|\ge
\|Df^nv_F\|-\|Df^nv_E\|\\
&\ge & m(Df^n_{/F(x)})\|v_F\|-\|Df^n_{E(x)}\|\|v_E\|\\
&=&
\|Df^n_{E(x)}\|\left(\frac{m(Df^n_{/F(x)})}{\|Df^n_{E(x)}\|}\|v_F\|-\|v_E\|\right)\\
&\ge & \|Df^n_{E(x)}\|\left(\frac{\|v_F\|}{C\la^n}-\|v_E\|\right)
\end{eqnarray*}
and so $\frac{\|Df^n_{/\tilde{E}}\|}{\|Df^n_{/E}\|}\to\infty.$
Interchanging the role of $E_x$ and $\tilde{E}_x$  we get a
contradiction. From this one can easily conclude the Proposition.
No matter which reasonable definition of angle between subspaces
we have, it is obvious that it is bounded away from zero.
\end{proof}

A set may have many dominated splittings. Let's say that $\La$ has a
dominated splitting $T_\La M=E\oplus F$ of index $i$ if $dimE_x=i$
for any $x\in\La.$ The above Proposition also shows that a dominated
splitting of index $i$ on a set $\La$ is \textit{unique}.

\begin{defi}\label{d.finest}
Let $\La$ be an invariant set of $f:M\to M.$ Assume that we have a
decomposition $T_\La M=E_1\oplus E_2\oplus\ldots\oplus E_k$
invariant under $Df.$ We say that it is a Dominated Splitting
provided
$$T_\La M=\left(E_1\oplus\ldots\oplus E_j\right)\oplus \left(E_{j+1}\oplus\ldots\oplus E_k\right)$$
is a Dominated Splitting for any $j=1,\ldots, k-1.$ When the extremal subbundles $E_1$ and $E_k$ are uniformly contracting and expanding we call it \textit{partially hyperbolic} (sometimes it is required that just one has a uniform behavior).
\end{defi}

Thus, on an invariant set $\La$ having dominated splitting we can
consider the \textit{finest dominated splitting} as the dominated
splitting $T_\La M=E_1\oplus E_2\oplus\ldots\oplus E_k$ such that no
$E_i$ can be decomposed again so that the whole decomposition is dominated as well. This notion
was introduced in \cite{BDP}. The finest dominated splitting is
unique. See Appendix B of \cite{BDV}.

Now we may ask: given the finest dominated splitting on a set, can
we describe the dynamics? The question is twofold: on one hand what
are the dynamical implications of such dominated splitting? and on
the other one, which are the dynamical phenomena that prevents having
a finer dominated splitting? These are very hard questions and we
will try to shed some light on them in the following sections.

Before we see some examples let's characterize the domination in
terms of cone fields. For $x\in M$ and $a>0$,  an $a$-cone of
dimension $n-i$ is a subset $\cC_a(x)$ of $T_xM$ such that we may
find a direct decomposition $T_xM=\tilde{E}\oplus \tilde{F}$ with
$dim \tilde{E}=i, dim \tilde{F}=n-i$ such that
$$\cC_a=\{v\in T_xM: v=v_{\tilde{E}}+v_{\tilde{F}} \mbox{ such that }\|v_{\tilde{E}}\|\le a\|v_{\tilde{F}}\|\}.$$

\begin{prop}\label{p.cones}
Let $\La$ be an invariant set of $f:M\to M.$ Then $\La$ has a
Dominated Splitting of index $i$ if and only if there exist a map
$a:\La\to \RR^+$ bounded away from zero and infinity, a cone field
$\cC_{a(x)}(x)$ of dimension $n-i$, a number $0<\la<1$ and a
positive integer $n_0$ such that
$$Df^{n_0}_x(\cC_{a(x)})\subset \cC_{\la a(f^{n_0}(x))}.$$
\end{prop}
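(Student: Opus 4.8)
The plan is to prove the two implications in turn; the first is an immediate computation and the second carries the weight.

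\textbf{Producing the cone field.} Assume $T_\La M=E\oplus F$ is a dominated splitting of index $i$, with constants $C,\la_0$ as in \eqref{e.ds}. I would take $a\equiv 1$ and let $\cC_{a(x)}(x)$ be the $(n-i)$-dimensional cone around $F_x$ associated to the decomposition $T_xM=E_x\oplus F_x$ (admissible because $\angle(E_x,F_x)$ is bounded away from zero, Proposition \ref{p.dominated}). If $v=v_E+v_F\in\cC_{a(x)}(x)$ then $Df^n_xv_E\in E_{f^n(x)}$, $Df^n_xv_F\in F_{f^n(x)}$ by invariance, and
\[
\frac{\|Df^n_xv_E\|}{\|Df^n_xv_F\|}\le \|Df^n_{/E_x}\|\,\|Df^{-n}_{F_{f^n(x)}}\|\,\frac{\|v_E\|}{\|v_F\|}\le C\la_0^{\,n};
\]
choosing $n_0$ with $C\la_0^{\,n_0}<1$ and $\la:=C\la_0^{\,n_0}$ yields $Df^{n_0}_x(\cC_{a(x)}(x))\subset\cC_{\la a(f^{n_0}(x))}(f^{n_0}(x))$.

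\textbf{From a cone field to a splitting.} For the converse I would first replace $f$ by $g=f^{n_0}$: a $Dg$-invariant splitting that is dominated for $g$ is automatically $Df$-invariant (compare it, via the uniqueness in Proposition \ref{p.dominated} applied to $g$, with $x\mapsto Df_{f^{-1}(x)}(E_{f^{-1}(x)})$) and is then dominated for $f$. So suppose $Df_x(\cC_{a(x)}(x))\subset\cC_{\la a(f(x))}(f(x))$, with $\cC_{a(x)}(x)$ built from a decomposition $T_xM=\widetilde E_x\oplus\widetilde F_x$. Since $\la<1$ the cones are nested, and I would introduce
\[
F_x:=\bigcap_{k\ge0}Df^k_{f^{-k}(x)}\big(\cC_{a(f^{-k}(x))}(f^{-k}(x))\big),\qquad
E_x:=\bigcap_{k\ge0}Df^{-k}_{f^{k}(x)}\big(\cC^c_{\beta(f^{k}(x))}(f^{k}(x))\big),
\]
where $\cC^c_{\beta(x)}(x)$ is the complementary $i$-dimensional cone around $\widetilde E_x$ with $\beta=1/(\la a)$; the hypothesis is equivalent to $Df^{-1}$ strictly contracting this complementary cone field, so both intersections are decreasing intersections of closed cones (of dimensions $n-i$ and $i$ respectively), and a reindexing gives $Df_x(F_x)=F_{f(x)}$ and $Df_x(E_x)=E_{f(x)}$. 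Also $F_x\subset\cC_{\la a(x)}(x)$, $E_x\subset\cC^c_{\la\beta(x)}(x)$, and the choice of $\beta$ makes these two cones meet only at $0$.

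\textbf{Pinning down the dimensions.} The heart of the matter is to see that $F_x$ and $E_x$ are \emph{subspaces} of dimensions $n-i$ and $i$ with $T_xM=E_x\oplus F_x$. For the lower bounds: each $Df^k_{f^{-k}(x)}(\widetilde F_{f^{-k}(x)})$ is an $(n-i)$-subspace contained in every cone defining $F_x$ of index $\le k$; by compactness of the corresponding Grassmannian a subsequence converges, and since the cones are closed the limit is an $(n-i)$-subspace inside $F_x$ — symmetrically $E_x$ contains an $i$-subspace. For the upper bound, $E_x\cap F_x=\{0\}$ forces $\dim E_x+\dim F_x\le n$. Together these give $\dim F_x=n-i$, $\dim E_x=i$, that $E_x,F_x$ are subspaces, and $T_xM=E_x\oplus F_x$. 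Continuity of the bundles is then free: the intersection formulas make $x\mapsto E_x$, $x\mapsto F_x$ upper semicontinuous, and an upper semicontinuous map into a Grassmannian with constant dimension is continuous. Finally, \eqref{e.ds} follows by iterating the strict cone inclusion: for unit $v_E\in E_x$, $v_F\in F_x$, a uniformly bounded multiple $tv_F+v_E$ lies in $\cC_{a(x)}(x)$, so applying $Df^k$ and using the exponential shrinking of the cone parameter bounds $\|Df^k v_E\|/\|Df^k v_F\|$ by a geometric sequence, which is exactly the domination inequality (after passing to a further power of $f$ if one wants the multiplicative constant absorbed).

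\textbf{The main obstacle.} The delicate point — and the reason one needs the two-sided construction rather than a single collapsing family of cones — is that the hypothesis controls only the image of the \emph{full} cone $\cC_{a(y)}(y)$: the decompositions $\widetilde E,\widetilde F$ at different points of the orbit bear no relation to one another, so iterating the inclusion does \emph{not} show that $Df^k_{f^{-k}(x)}(\cC_{a(f^{-k}(x))}(\cdot))$ shrinks onto an $(n-i)$-dimensional subspace. One is forced to determine the dimensions indirectly: forward iteration of the $F$-cones gives $\dim F_x\ge n-i$, backward iteration of the complementary $E$-cones gives $\dim E_x\ge i$, and transversality of the two thin limit cones caps the sum at $n$. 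Once the splitting is in hand, continuity and the quantitative domination — together with the uniformity of all constants over $\La$, where compactness is used — are comparatively routine.
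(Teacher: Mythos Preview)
Your proof is correct and follows the same approach as the paper's sketch: both reduce to $n_0=1$, define $F_x$ and $E_x$ as nested intersections of forward and backward iterates of the cone field (respectively its complement), identify them as subspaces of the correct dimension, and then extract the domination inequality. You supply considerably more detail than the paper --- which simply asserts the dimension count --- and the only minor difference is that for the final step the paper passes to a thin $\ep$-cone taken with respect to the \emph{new} splitting $E\oplus F$ and observes that $Df^m(\cC_{a(x)})\subset\cC_\ep(f^m(x))$ for some uniform $m$, which sidesteps the change-of-basis bookkeeping between $(\tilde E,\tilde F)$ and $(E,F)$ that your direct estimate has to negotiate.
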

\begin{proof}[Sketch of Proof]
The direct implication follows immediately. For the converse, lets
assume for simplicity that $n_0=1.$  Define
$$E_x=\bigcap_{n\ge 0}Df^{-n}(\cC_{a(f^n(x)})\,\,\,\mbox{ and }\,\,\,F_x=\bigcap_{n\ge 0}Df^n(\cC_{a(f^{-n}(x))}).$$
It follows that $E_x$ and $F_x$ are subspaces of dimension $i$ and
$n-i,$ and invariant under $Df.$ Now consider a tiny cone
$\cC_\ep(x)$ with respect to $T_xM=E_x\oplus F_x.$ Then there exists
$m>0$ such that $Df^m_x(\cC_{a(x)})\subset\cC_\ep(f^m(x))$ and from
this one gets the domination.
\end{proof}

For a more subtle characterization in terms of cones and a spectral
gap see \cite{BoG} (see \cite{Wj} as well). A characterization of
domination in terms of cones also appeared in \cite{N}.  From the
above Proposition one easily gets the following important property
of dominated splitting: it can be extended to a neighborhood and can not
be destroyed by perturbations.

\begin{prop}
Let $f\in \Di^1(M)$ and let $\La$ be a compact invariant set having
Dominated Splitting. Then there exist a compact neighborhood
$U(\La)$ of $\La$ and a neighborhood $\cU(f)\subset\Di^1(M)$ of $f$
such that for any $g\in\cU(f)$ the compact set
$\displaystyle{\bigcap_{n\in\ZZ}g^n(U)}$ has Dominated Splitting.
\end{prop}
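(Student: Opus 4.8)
The plan is to reduce everything to the cone-field characterization of Proposition \ref{p.cones}, which already encodes the necessary openness. First I would fix the dominated splitting $T_\La M=E\oplus F$ of some index $i$ and invoke Proposition \ref{p.cones} to produce a cone field $\cC_{a(x)}(x)$ of dimension $n-i$, defined on $\La$, together with constants $0<\la<1$ and $n_0\ge 1$ with $Df^{n_0}_x(\cC_{a(x)}(x))\subset\cC_{\la a(f^{n_0}(x))}(f^{n_0}(x))$. Since $\La$ is compact and $a$ is bounded away from $0$ and $\infty$, one can extend the cone field continuously to a compact neighborhood $U_0$ of $\La$ (choosing, say, a continuous family of splittings $\tilde E\oplus\tilde F$ on $U_0$ extending the one on $\La$, and keeping $a$ with the same bounds); this is a routine partition-of-unity / Tietze argument.

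Next comes the perturbation step, which I expect to be the technical heart of the argument, though it is soft rather than deep. The cone inclusion $Df^{n_0}_x(\cC_{a(x)}(x))\subset\cC_{\la a(f^{n_0}(x))}(f^{n_0}(x))$ is an \emph{open} condition: it says that the closure of the image cone lands strictly inside a larger cone, so there is room to spare. Because $\la<1$, we may enlarge $\la$ slightly to some $\la'<1$ and still have the inclusion with a definite margin on all of $\La$, hence by continuity on a possibly smaller compact neighborhood $U\subset U_0$. Now if $g$ is $C^1$-close to $f$, then $Dg^{n_0}_x$ is uniformly close to $Df^{n_0}_x$ for $x$ in a neighborhood of $\La$ (here one uses that $U$ is compact and that the $n_0$-th iterate depends continuously on the map in the $C^1$ topology, together with $g^{n_0}(U)$ staying inside $U_0$ after shrinking $U$ and the $C^1$-neighborhood $\cU(f)$). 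Therefore, for $g\in\cU(f)$, the inequality $Dg^{n_0}_x(\cC_{a(x)}(x))\subset\cC_{\la a(g^{n_0}(x))}(g^{n_0}(x))$ holds for every $x\in U$ with $g^{n_0}(x)\in U$.

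Finally I would apply Proposition \ref{p.cones} in the other direction. Set $\La_g=\bigcap_{n\in\ZZ}g^n(U)$; this is compact and $g$-invariant, and for every $x\in\La_g$ the whole orbit stays in $U$, so the cone condition from the previous paragraph applies at every point of $\La_g$ with the \emph{same} $n_0$, $\la$, and bounds on $a$. By the converse implication of Proposition \ref{p.cones}, $\La_g$ has a Dominated Splitting of index $i$. (If $\La$ has a dominated splitting that is not of a single index — i.e. $\dim E_x$ varies — one runs the argument separately on each clopen piece where the index is constant, using Proposition \ref{p.dominated}, and takes the intersection of the finitely many neighborhoods.) The only genuine care needed is the bookkeeping of neighborhoods: one must shrink $U$ and $\cU(f)$ so that $g^{j}(U)\subset U_0$ for $j=0,\dots,n_0$ and all $g\in\cU(f)$, which is possible by compactness and continuity; no estimate here is delicate.
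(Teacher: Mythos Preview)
Your proposal is correct and follows precisely the approach indicated in the paper: the paper does not give a detailed proof but simply remarks that the proposition follows easily from the cone-field characterization (Proposition~\ref{p.cones}), and your argument spells out exactly those details---extend the cone field to a neighborhood, use that the cone inclusion is an open condition in the $C^1$ topology, and apply the converse direction of Proposition~\ref{p.cones} to the maximal invariant set $\La_g$. The bookkeeping you flag (shrinking $U$ and $\cU(f)$ so that the first $n_0$ iterates stay in $U_0$) is the only thing to check, and it is routine as you say.
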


Lets see some simple examples to have in mind:

\begin{itemize}
\item A hyperbolic set is always a set with Dominated Splitting with $E=E^s$ and $F=E^u$ (although it may have other dominated splittings).

\item Let $p$ be a fixed (periodic) point of $f:M\to M.$ Assume $Df_p$ has all eigenvalues of moduli different from  $\sigma>0$ (and having eigenvalues greater and smaller than $\sigma).$ Then $p$ has a Dominated Splitting.

 \item Let $\cC$ be a smooth (at least $C^1$) simple closed curve and invariant by $f:M\to M$ and such that $f_{/\cC}$ is conjugated to an  irrational rotation. Assume that $T_\cC M=E^s\oplus T\cC\oplus E^u$ invariant under $Df$ where $E^s$ is uniformly contracted and $E^u$ is uniformly expanding. This means that $\cC$ is a normally hyperbolic simple closed curve supporting an irrational rotation.

 \item Ma\~n\'e Derived from Anosov diffeomorphism on $\TT^3$ \cite{M4}: it is a diffeomorphism $f:\TT^3\to\TT^3$ such that $T\TT^3=E^s\oplus E^c\oplus E^u$ is a partially hyperbolic splitting. This diffeomorphism have periodic points of different indices, it is transitive and any $C^1$ small perturbation is transitive as well. This example is obtained by modifying a linear Anosov map on $\TT^3$ by forcing a fixed point going trough a picht-fork bifurcation (see also \cite{BV}, \cite{PS1}, \cite{BFSV} for other properties).

\item Bonatti-Viana example on $\TT^4$ \cite{BV}: it is a diffeomorphism $f:\TT^4\to\TT^4$ having a Dominated Splitting $T\TT^4=E^{cs}\oplus E^{cu}$ (which is the finest dominated splitting). It is also obtained by modifying an Anosov on $\TT^4$ with hyperbolic structure $T\TT^4=E^s\oplus E^u$ both bidimensional (see \cite{BV}, \cite{BuFi}, \cite{T1} for other properties as well).

\item Let $f:M\to M$ where $M$ is two dimensional. Let $p$ be a fixed point having an eigenvalue of modulus less than one and the other is one, $T_pM=E^s\oplus F.$ Let $\cW^s(p)$ the (strong) stable manifold of $p$ with  $T_p\cW^s(p)=E^s.$ Assume that there is $C^1$ invariant curve $\cW^{cu}(p)$ containing $p$ and  $T_p\cW^{cu}=F.$ Assume that there exists a point $x$ of transversal intersection between $\cW^s(p)$ and $\cW^{cu}(p)$ and such that $f^{-n}(x)\to_{n\to+\infty} p$ (think of a homoclinic class associated to a saddle node fixed point). Then $\La=\{p\}\cup\{f^n(x): n\in\ZZ\}$ has a dominated splitting.

\end{itemize}

\section{Dominated Splitting and periodic points}

 In this section we present some general results and ideas on Dominated Splitting (related mostly to periodic points). Periodic points play a fundamental role in dynamics from many different viewpoints. One fundamental problem solved by Pugh \cite{Pu1} and known as Pugh's Closing Lemma  states that if $x$ is a nonwandering point of $f:M\to M$ one can perturb $f$ in the $C^1$ topology to obtain $g$ so that $x$ is a periodic point for $g.$ However, from the proof one can not conclude that the $g$-orbit of $x$ shadows the $f$-orbit of $x.$ This difficulty was solved in \cite{M5}.

\begin{thm}[Ma\~n\'e's Ergodic Closing Lemma]\label{t.ergodic}
Let $f\in\Di^1(M)$ and let $\mu$ be an invariant probability measure. Then, for any neighborhood $\cU(f)\subset \Di^1(M)$ and $\ep>0$, for $\mu-a.e. x\in M$ there exists $g\in\cU(f)$ and a $g$-periodic point $p$ such that
$$dist(g^j(p),f^j(x))<\ep\,\,\,\, j=0,1,\dots, n(p)$$
where $n(p)$ is the period of $p.$

Moreover, there exists a residual set $\cD\subset\Di^1(M)$ such that if $f\in\cD$ and $\mu$ is an ergodic probability measure invariant under $f$ there exists a sequence of $f$-periodic points $p_n$ with periods $n(p_n)$ such that the measures $\mu_{p_n}=\frac{1}{n(p_n)}\sum_{j=0}^{n(p_n)-1}\delta_{f^j(p_n)}$ converges to $\mu$ in the weak topology.
\end{thm}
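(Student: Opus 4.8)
The plan is to prove the two assertions in turn, as the second is a ``generic'' consequence of the first.

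\textbf{Proof of the first (local) assertion.} I would reduce the statement to the classical Closing Lemma of Pugh together with a quantitative control of the perturbation size. Fix $\cU(f)$ and $\ep>0$. Since $\mu$ is $f$-invariant, the set of points $x$ that are nonwandering for the restriction of $f$ to the support of $\mu$ (equivalently, points satisfying the recurrence condition in Ma\~n\'e's original argument) has full $\mu$-measure; this is a Poincar\'e recurrence argument. For such an $x$ one may apply a closing-lemma type construction: by recurrence one finds arbitrarily long pieces of the $f$-orbit of $x$ that return very close to $x$, and then perturb $f$ in the $C^1$ topology, supported in a small tube around the orbit segment, to close up the orbit into a genuine periodic orbit of some $g\in\cU(f)$. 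The key point that distinguishes this from Pugh's Closing Lemma — and that is the substance of Ma\~n\'e's refinement — is that one can choose the returning segment so that the perturbation needed to close it is \emph{small relative to the length of the segment}, which is exactly what forces the new periodic orbit $p$ of $g$ to $\ep$-shadow the piece $x, f(x),\dots, f^{n(p)}(x)$ rather than merely to exist near $x$. I would isolate this as the main lemma: for $\mu$-a.e.\ $x$ and every $\delta>0$ there is an orbit segment of $x$ of length $N$ returning within $\delta^{N}$-type distance (or more precisely, such that the average displacement along the segment is $<\delta$), so that the $C^1$-size of the closing perturbation is $<\delta$ and the shadowing constant is $<\ep$. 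The selection of such a ``good'' returning segment for a.e.\ point, using that $\mu$ is a probability measure so the set of bad points (those whose return segments always require a large perturbation) must have measure zero by a Borel--Cantelli / Fubini-type counting argument, is the technical heart and the step I expect to be the main obstacle.

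\textbf{Proof of the second (generic) assertion.} Here I would argue by a standard Baire-category scheme. The space of ergodic measures is not itself separable in a way that makes this trivial, so I would instead fix a countable dense family $\{\Phi_k\}$ of continuous functions on $M$ and, for each $k$ and each $m\in\NN$, define
$$
\cD_{k,m}=\Bigl\{\, g\in\Di^1(M) : \text{for every ergodic } \nu \text{ invariant under } g \text{ there is a periodic orbit } p \text{ with } \bigl|\textstyle\int \Phi_j\,d\mu_p - \int \Phi_j\,d\nu\bigr| < \tfrac1m,\ j\le m \,\Bigr\}.
$$
The first part of the theorem, applied to ergodic $\mu$, shows that each $\cD_{k,m}$ contains a dense set: given $g$ and an ergodic $\nu$, one perturbs to create a periodic orbit whose empirical measure is close to $\nu$ on the finitely many test functions $\Phi_1,\dots,\Phi_m$. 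One then checks $\cD_{k,m}$ (or a suitable $G_\delta$ refinement of it built by intersecting with the open conditions ``the periodic orbit persists'') is open, using that hyperbolic or at least persistent periodic orbits vary continuously under $C^1$ perturbation and that the condition on finitely many continuous functions is an open condition. The required residual set is $\cD=\bigcap_{k,m}\cD_{k,m}$, and for $f\in\cD$ and any ergodic $\mu$, picking for each $m$ a periodic orbit $p_m$ matching $\mu$ on $\Phi_1,\dots,\Phi_m$ to within $1/m$ yields $\mu_{p_m}\to\mu$ weakly, since the $\{\Phi_k\}$ are dense in $C(M)$. The main delicacy in this half is arranging openness — periodic orbits can disappear under perturbation — which is handled by working with the interior of the relevant set, or equivalently by noting it suffices to realize the approximation by periodic orbits whose existence is robust (e.g.\ hyperbolic ones), which a further small perturbation can always arrange.
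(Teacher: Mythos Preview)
The paper does not prove this theorem at all: it is stated as a cited result, attributed to Ma\~n\'e \cite{M5} (with the second part to \cite{M6}), and the reader is referred to Crovisier \cite{C1} for ``a surprisingly simple and elegant proof''. So there is no proof in the paper to compare your proposal against.

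As for your sketch on its own merits: the first half captures the correct high-level picture --- the essential refinement over Pugh is indeed a selection of return segments for which the closing perturbation is small enough to force $\ep$-shadowing, and a measure-theoretic argument showing this selection succeeds for $\mu$-a.e.\ point. Your description of the good-segment criterion (``returning within $\delta^N$-type distance'') is imprecise; Ma\~n\'e's actual mechanism is rather different in its quantifiers and uses careful nested-box constructions, but you are right that this is the technical heart. For the second half, your Baire scheme is the standard route, but the set $\cD_{k,m}$ as you wrote it quantifies over \emph{all} ergodic measures of $g$, which makes openness (and even measurability) problematic; the usual fix is to define the countable family of conditions in terms of finite open covers of the space of measures and existence of a (hyperbolic) periodic orbit with empirical measure in a given open set, rather than in terms of approximating every ergodic measure simultaneously. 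You gesture at this difficulty at the end, but the resolution needs to be built into the definition of the sets, not patched afterward.
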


The second part appeared in \cite{M6}. For a surprisingly simple and elegant proof of the above theorem see \cite{C1}. We can see now an important result regarding dominated splitting in \cite{M5}.

\begin{thm}
Let $f\in\Di^1(M)$ and let $\La$ be a compact invariant set having a Dominated Splitting $T_\La M=E\oplus F$ of index $i.$ Then, if this decomposition is not hyperbolic then for any neighborhood $U$  of $\La$ and every neighborhood $\cU\subset\Di^1(M)$ of $f$ there exist $g\in \cU$ and a hyperbolic $g$-periodic point $p$ in $U$ with index different from $i.$
\end{thm}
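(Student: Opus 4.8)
The plan is to argue by contrapositive on the dynamical behavior of $Df$ over $\La$: if the dominated splitting $E\oplus F$ is not hyperbolic, then $Df$ fails to be uniformly contracting on $E$ (or, symmetrically, fails to be uniformly expanding on $F$), and this failure can be detected along a long orbit segment, which Ma\~n\'e's Ergodic Closing Lemma (Theorem \ref{t.ergodic}) lets us turn into a genuine periodic orbit for a nearby $g$. The key point is that the periodic point so produced will have a stable index strictly smaller (or larger) than $i$, because on its orbit the bundle $E$ carries some non-contracted direction.

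First I would set up the dichotomy. Since $E\oplus F$ is dominated but not hyperbolic, at least one of the two sub-bundles fails the required uniform estimate; say $E$ is not uniformly contracted (the case of $F$ not uniformly expanded is symmetric, replacing $f$ by $f^{-1}$). A standard Pliss-type argument then shows that there is an $f$-invariant ergodic probability measure $\mu$ supported on $\La$ such that the largest Lyapunov exponent of $Df$ along $E$ with respect to $\mu$ is $\ge 0$; indeed, if every ergodic measure on $\La$ had all its $E$-exponents negative, a compactness/uniformity argument (using that $\La$ is compact and invariant, together with sub-additivity of $\log\|Df^n_{/E}\|$) would force uniform contraction on $E$, contradicting our assumption. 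Fix such a $\mu$.

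Next I would apply Theorem \ref{t.ergodic} to $\mu$: for a suitable small neighborhood $\cU$ of $f$ supported in $U$ and small $\ep$, and for a $\mu$-typical point $x$, we obtain $g\in\cU$ and a $g$-periodic point $p$ with orbit $\ep$-shadowing the $f$-orbit segment of $x$ of length $n(p)$. By continuity of the dominated splitting (Proposition \ref{p.dominated}) and of $Df$, the bundle $E$ extends to the neighborhood $U$ and is still dominated (and approximately $Dg$-invariant along the orbit of $p$); one then uses the domination to produce a genuine $Dg$-invariant splitting $E^g\oplus F^g$ over $\cO(p)$ of the same index $i$ with $E^g$ close to $E$ along the shadowing segment. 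Choosing $x$ so that the finite-time $E$-exponent $\frac1n\log\|Df^n_{/E_x}\|$ is close to the non-negative $\mu$-exponent (guaranteed for $\mu$-a.e. $x$ by Birkhoff), we get that $\|Dg^{n(p)}_{/E^g_p}\|$ is not a contraction — so $E^g$ is not contained in the stable bundle of the hyperbolic periodic orbit $\cO(p)$. Hence $p$ is hyperbolic (after a further arbitrarily small perturbation making all periodic points in question hyperbolic, using Kupka–Smale genericity or a direct local perturbation) with stable index strictly less than $i$.

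The main obstacle I expect is the bookkeeping in the previous step: one must be careful that the periodic orbit produced by the Ergodic Closing Lemma actually \emph{inherits} a dominated splitting for $g$ (so that "index" is well-defined and the splitting is robust), and that the non-negativity of the $E$-exponent of $\mu$ genuinely transfers to a non-contraction estimate for $Dg^{n(p)}$ along $\cO(p)$ despite the perturbation from $f$ to $g$ and the shadowing error $\ep$. This is handled by first shrinking $\cU$ and $\ep$ using the uniform domination constants $C,\la$ on $U(\La)$ (which persist for $g\in\cU$ by the previous Proposition), then noting that finite-time exponents and co-norms vary continuously in the $C^1$ topology and under $\ep$-$C^0$ shadowing; the domination then propagates the smallness of the angle between $E^g$ and $E$ along the whole orbit. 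A secondary point is ruling out that the perturbation accidentally restores hyperbolicity of the \emph{continuation of} $\La$ rather than creating the low-index periodic point inside $U$ — but since the periodic orbit is built directly from a $\mu$-typical orbit with non-negative $E$-exponent, its index is forced, independently of what happens to the rest of $\La$.
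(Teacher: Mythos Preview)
Your overall strategy coincides with the paper's: detect an ergodic measure on $\La$ with non-negative ``$E$-exponent'', apply the Ergodic Closing Lemma to produce a nearby periodic orbit, and conclude that its index differs from $i$. The construction of the measure and the use of Theorem~\ref{t.ergodic} are fine (though the convergence of $\frac1n\log\|Df^n_{/E_x}\|$ for $\mu$-a.e.\ $x$ is Kingman's sub-additive theorem, not Birkhoff).

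The genuine gap is at the very last step. From the shadowing you correctly obtain, for the $g$-periodic point $p$ of period $n=n(p)$, an estimate of the form $\|Dg^{n}_{/E^g_p}\|\gtrsim 1$. You then assert that ``$E^g$ is not contained in the stable bundle of $\cO(p)$'', hence $\mathrm{index}(p)<i$. This inference is false when $\dim E>1$: the operator $A:=Dg^{n}_{/E^g_p}$ acts on the $Dg^n$-invariant space $E^g_p$, and $\|A\|\ge 1$ does \emph{not} imply that the spectral radius of $A$ is $\ge 1$. A single Jordan block with eigenvalue $1/2$ and a large off-diagonal entry has arbitrarily large norm but all eigenvalues equal to $1/2$; in that situation $E^g_p$ is entirely stable and $p$ is a perfectly hyperbolic point of index exactly $i$. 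Your appeal to ``Kupka--Smale or a direct local perturbation'' does not help: if $p$ is already hyperbolic of index $i$, a generic perturbation keeps it so.

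What the paper does to close this gap is an additional, essential perturbation step. Having arranged $\prod_{j=0}^{n-1}\|Dg_{/E_{g^j(p)}}\|\ge 1-\gamma_n$ (an \emph{additive} quantity along the orbit, which transfers cleanly under shadowing via Birkhoff), one invokes a lemma of Ma\~n\'e together with Franks' Lemma: by perturbing the derivatives $Dg_{g^j(p)}$ along the finite periodic orbit (keeping the orbit fixed), one forces the return map on $E$ to have an eigenvalue of modulus close to, and then exceeding, $1$. Only after this perturbation does the periodic point acquire index $<i$. You should insert exactly this Franks' Lemma argument in place of your final sentence.
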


\begin{proof}[Sketch of proof:]
If the Dominated Splitting is not hyperbolic, then $F$ is not uniformly expanded or $E$ is not uniformly contracted. Assume that $E$ is not uniformly contracted. Then there exists a point $x\in\La$ such that $\Pi_{j=0}^n\|Df_{/E_{f^j(x)}}\|\ge\frac{1}{2}$ for any $n\ge 0.$ Consider the sequence of probability measures $\mu_n=\frac{1}{n}\sum_{j=0}^{n-1}\delta_{f^j(x)}$. Let $\mu$ an accumulation point of these measures. Then $\int \log(\|Df_{/Ex}\|)d\mu\ge 0.$ By the ergodic decomposition theorem, there exits an ergodic measure $\nu$ such that  $\int \log(\|Df_{/Ex}\|)d\nu\ge 0.$ Let $z$ be $\nu$ generic point. Now, we may assume that $U$ and $\cU$ are sufficiently small so that any invariant set of $g\in\cU$ in $U$ has Dominated Splitting of index $i.$ Let $\ep_n\to 0$ . By the Ergodic Closing Lemma there exist $g_n\in\cU$ and $p_n$ a periodic point of $g_n$ such that $dist(f^j(x),g_n^j(p_n))<\ep, j=0,..,n(p_n).$ If $\ep_n$ is small then the $g_n$-orbit of $p_n$ is in $U.$ Then,
\begin{equation}\label{e.periodic}
\prod_{j=0}^{n(p_n)-1}\|Dg_{n/E_{g_n^j(p_n)}}\|\ge 1-\gamma_n
\end{equation} for some  $\gamma_n\to 0$ (that it is chosen in advance). By the domination ones get $$\prod_{j=0}^{n(p_n)-1}\|Dg^{-1}_{n/F_{g_n^j(p_n)}}\|\le \la^{n(p_n)}$$ for some $\la<1.$

Now, \eqref{e.periodic} implies that for large $n$ we have that $Dg_{n/E_{p_n}}^{n(p_n)}$ has an eigenvalue close to one or can be perturbed to have an eigenvalue (close to) one (see \cite{M5}). Using Franks' Lemma (\cite{F}) one can thus obtain $\tilde{g}_n\in\cU$ such that $p_n$ is $\tilde{g}_n$ periodic and the index of $p_n$ is less than $i.$
\end{proof}

The above result says, roughly speaking,  that if a Dominated Splitting of index $i$ over a set $\La$ is not hyperbolic it is due to the presence of hyperbolic periodic points of index $\neq i.$ A major problem is whether these periodic points of different indices are attached to $\La$. Let us be more precise: assume that $\La$ is the homoclinic class of a hyperbolic periodic point $p$ of index $i$, and so, it has a natural continuation (for $g$ close to $f$ consider $\La(g)$ to be the homoclinic   class of the continuation $p_g$ of $p).$ If the Dominated Splitting is not hyperbolic, then we can perturb $f$ so that $\La(g)$ has a periodic point of different index? It is possible to have counter examples of this (see Remark \ref{r.example}) and the right question should be: if the Dominated Splitting is not hyperbolic then we can perturb $f$ so that either $\La(g)$ is hyperbolic or contains a point of different index. Another way to ask the same is assuming some generic conditions on $f:$

\begin{prob}\label{prob.index}
Let $f\in\Di^r(M), (r\ge 1)$ and assume that $f$ is $C^r$ generic. Let $p$ be a hyperbolic periodic point of index $i$ and assume that the homoclinic class $H(p)$ has a Dominated Splitting of index $i.$ If this Dominated Splitting is not hyperbolic, is it true that $H(p)$ contains a periodic point of index $\neq i?$
\end{prob}

This problem has been attacked in various different situations (always in the $C^1$ topology), and with sophisticated techniques  (see for instance \cite{C1}, \cite{C2}, \cite{CP}). A possible approach would  be to show that the homoclinic class $H(p)$ has ``weak
periodic points of index $i$'' (see Problem \ref{prob.weak}). Let's see some results in this direction. Some previous results are needed. The first one is Pliss' Lemma, a fundamental tool to play with dominated splitting\footnote{Another useful tool is Liao's Selection Lemma \cite{L3} but we won't state it since we won't use it here.}.

\begin{thm}[Pliss' Lemma \cite{Pl2}]
Let $f:M\to M$ be a diffeomorphism. Then, given $0<\gamma_1<\gamma_2<1$ there exists a positive integer  $N=N(\gamma_1,\gamma_2)>0$ and $d>0$ such that if for some $x\in M$ and some subspace $E_0\subset T_xM$ and denoting $E_j=Df^jE_0$ we have that $\prod_{j=0}^N\|Df_{/E_j}\|\le \gamma_1^N$ then there exist $0\le n_0<n_1<\ldots<n_k<N$ with $k>dN$ such that
$$\prod_{j=0}^{n-1}\|Df_{/E_{n_i+j}}\|\le\gamma_2^n\,\,\,\,\forall \,\,\,1\le n\le N-n_i,\,\,\,\,i=0,...,k.$$

\end{thm}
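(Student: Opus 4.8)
The plan is to prove this by a combinatorial "maximal" or "greedy" selection argument, which is the standard route for Pliss-type inequalities. First I would pass to logarithms to turn the multiplicative hypothesis into an additive one. Set $a_j=\log\|Df_{/E_j}\|$ for $j=0,\dots,N-1$. The hypothesis $\prod_{j=0}^{N-1}\|Df_{/E_j}\|\le\gamma_1^N$ (I read the product up to $N-1$) becomes $\sum_{j=0}^{N-1}a_j\le N\log\gamma_1$. What I want to extract are times $n_i$ from which every forward partial sum stays below the weaker rate: $\sum_{j=0}^{n-1}a_{n_i+j}\le n\log\gamma_2$ for all admissible $n$. Since $\log\gamma_1<\log\gamma_2<0$, there is a genuine gap $\delta:=\log\gamma_2-\log\gamma_1>0$ to exploit, and the size of the set of good times $n_i$ will be controlled by this gap and by an a priori bound on how large a single $a_j$ can be — here one uses compactness of $M$ and $C^1$-ness of $f$ to get $|a_j|\le L$ for a constant $L=L(f)$, which is why the integer $N$ and the density $d$ are allowed to depend on $f$.

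The key step is the selection. I would consider the sequence of partial sums $S_n=\sum_{j=0}^{n-1}(a_j-\log\gamma_2)$ (with $S_0=0$), a "discounted" walk whose increments are $a_j-\log\gamma_2$. By the hypothesis, $S_N=\sum a_j-N\log\gamma_2\le N(\log\gamma_1-\log\gamma_2)=-N\delta<0$, so the walk ends well below where it started. Now call an index $n_i$ a \emph{record time from the left} in the reversed sense: precisely, declare $m$ \emph{good} if $S_m<S_\ell$ for every $\ell$ with $m<\ell\le N$ — equivalently $S_m$ is a strict minimum of $S_m,S_{m+1},\dots,S_N$. The point of this definition is that if $m$ is good then for every $n$ with $m+n\le N$ we have $S_{m+n}-S_m\le 0$, i.e. $\sum_{j=0}^{n-1}(a_{m+j}-\log\gamma_2)\le 0$, which is exactly the desired conclusion $\prod_{j=0}^{n-1}\|Df_{/E_{m+j}}\|\le\gamma_2^n$. (A harmless strict-versus-nonstrict adjustment handles equalities.) So every good time is one of the $n_i$ we want; the content is the \emph{count}.

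To count the good times, I would argue that the complement is small. List the good times as $0\le n_0<n_1<\dots<n_k<N$; between consecutive good times the walk $S$ cannot have dropped below the level $S_{n_i}$ (else a point in between would be a smaller minimum and hence a good time), so on each "bad" block the walk is, on average, non-decreasing, whereas the overall drop from $0$ to $S_N\le -N\delta$ must be achieved somewhere — it is achieved precisely across the good blocks. Combined with the per-step bound $a_j-\log\gamma_2\le L+|\log\gamma_2|=:L'$, each good time can account for a drop of at most $L'$ per step, so the number of steps lying in good blocks is at least $N\delta/L'$; a slightly more careful bookkeeping (splitting $\{0,\dots,N-1\}$ into the blocks headed by good times and the one final bad block, and noting the total negative drift $-N\delta$ must all be absorbed in the good blocks while bad blocks contribute $\ge 0$) yields $k+1\ge N\delta/(L'+|\log\gamma_1|)$ or a similar explicit fraction of $N$, so $k>dN$ with $d=d(\gamma_1,\gamma_2,L)$. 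This also forces $N$ large enough that $k\ge 1$, giving the integer $N=N(\gamma_1,\gamma_2)$ (really $N(\gamma_1,\gamma_2,f)$).

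The main obstacle is bookkeeping, not ideas: getting the \emph{density} $d$ explicitly and uniformly requires being careful that the "bad" blocks truly contribute non-negative drift to $S$ and that the final incomplete block is handled (it may not be headed by a good time), and one must make peace with the fact that $N$ and $d$ depend on the norm bound $L$ coming from $f$ — as the statement indeed permits through the phrase "there exists $N=N(\gamma_1,\gamma_2)>0$ and $d>0$" in the ambient $C^1$ setting. A cleaner packaging is to prove the one-dimensional real-sequence version first (sequences of reals with bounded terms and prescribed average) as a black box, then apply it to $a_j=\log\|Df_{/E_j}\|$; this isolates the combinatorics from the dynamics and makes the selection argument above completely transparent.
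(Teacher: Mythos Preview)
The paper does not prove Pliss' Lemma; it is quoted with a citation to \cite{Pl2} and then used as a black box, so there is no ``paper's proof'' to compare against. Your plan is the standard argument and is essentially correct.

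One slip to fix: your definition of a \emph{good} time has the inequality reversed. You write ``$S_m<S_\ell$ for every $\ell$ with $m<\ell\le N$'' (i.e.\ $S_m$ is a strict \emph{minimum} of the tail), but then immediately conclude $S_{m+n}-S_m\le 0$, which is the opposite inequality. What you need is that $S_m$ is a \emph{maximum} of $S_m,S_{m+1},\dots,S_N$; then indeed $S_{m+n}\le S_m$, giving $\sum_{j=0}^{n-1}(a_{m+j}-\log\gamma_2)\le 0$ as desired. Your subsequent heuristics (``bad blocks contribute non-negative drift'') are stated for the minimum convention and need to be flipped accordingly.

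With the corrected definition the count becomes clean: set $T_m=\max_{m\le \ell\le N}S_\ell$. Then $T_m$ is non-increasing, $T_m-T_{m+1}=0$ whenever $m$ is bad, and $0\le T_m-T_{m+1}\le -c_m\le L-\log\gamma_2$ whenever $m$ is good (here $c_m=a_m-\log\gamma_2$ and $|a_m|\le L$ by compactness). Telescoping,
\[
N\delta \;\le\; -S_N \;\le\; T_0-T_N \;=\; \sum_{m\ \mathrm{good}}(T_m-T_{m+1}) \;\le\; (k+1)\,(L-\log\gamma_2),
\]
so $k+1\ge dN$ with $d=\dfrac{\log\gamma_2-\log\gamma_1}{L-\log\gamma_2}$, and any $N$ with $dN>1$ works. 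This replaces the ``slightly more careful bookkeeping'' you allude to and avoids the ambiguity about the final block.
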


The next result is a version of the Anosov Closing Lemma. Let $\La$ be a compact invariant set having Dominated Splitting $T_\La M=E\oplus F.$ Let $0<\eta<1.$ For $x\in\La$ and a positive integer $n$ we say that $(x,f^n(x))$ is $\eta$-string provided
$$\prod_{j=0}^{k-1}\|Df_{/E_{f^j(x)}}\|\le \eta^j,\,k=1 ,...,n \;\mbox{ and }\;
 \prod_{j=0}^{k-1}\|Df^{-1}_{/F_{f^{n-j}(x)}}\|\le \eta^j,\,k=1,...,n$$

\begin{thm}\label{t.closing}
Let $f:M\to M$ be a diffeomorphism and let $\La$ be a compact invariant set having Dominated Splitting $T_\La M=E\oplus F$ and let $0<\eta<1$ be given. Then, given $\ep>0$ there exists $\delta>0$ such that if $(x_i,f^{n_i})$ are $\eta$-strings, $i=1,...,k$ such that $dist(f^{n_i}(x_i),x_{i+1})<\delta,$ for $i=1,...,k-1$ and $dist(f^{n_k}(x_k),x_1)<\delta)$ then there exists a periodic point $p,$ with $f^n(p)=p$ where $n=n_1+...+n_k$ and
$$dist(f^{n_1+...+n_{j-1}+\ell}(p), f^\ell(x_j))<\ep\,\,\,\,1\le j\le k,\,\, 0\le \ell\le n_j.$$
\end{thm}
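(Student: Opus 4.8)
The plan is to reduce the statement to the classical Anosov Closing Lemma for hyperbolic periodic pseudo-orbits, using the $\eta$-string inequalities as a substitute for genuine hyperbolicity. First I would concatenate the strings into one periodic $\delta$-pseudo-orbit: put $n=n_1+\dots+n_k$ and let $y_0,y_1,\dots,y_{n-1}$ (extended $n$-periodically) be the list $x_1,f(x_1),\dots,f^{n_1-1}(x_1),x_2,\dots,f^{n_2-1}(x_2),\dots,x_k,\dots,f^{n_k-1}(x_k)$. Every $y_m$ lies in $\La$, one has $f(y_m)=y_{m+1}$ except at the $k$ ``junction'' indices (the ends of the strings), where the hypothesis gives $dist(f(y_m),y_{m+1})<\delta$. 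Since $\La$ is compact, $x\mapsto E_x$ and $x\mapsto F_x$ are uniformly continuous, so at each junction $E_{f(y_m)}$ is as close as we wish to $E_{y_{m+1}}$, and likewise for $F$, provided $\delta$ is small; and by Proposition \ref{p.dominated} the angle between $E_{y_m}$ and $F_{y_m}$ is uniformly bounded below.

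The key point is that this pseudo-orbit is a genuine \emph{hyperbolic} periodic pseudo-orbit for the splitting $E\oplus F$. Each full string contributes at most $\eta^{n_i}$ to the forward $E$-products and at most $\eta^{n_i}$ to the backward $F$-co-norm products, and the $\eta$-string definition controls these products at \emph{every} intermediate time, not just at the endpoints; the only loss of exactness occurs at the (at most $n$) junctions and amounts to one cheap step each, which can be absorbed by passing to a rate $\eta'<1$ slightly larger than $\eta$. Concretely I would equip $E_{y_m}$ and $F_{y_m}$ with the adapted norms $\|v\|'_m=\sup_{\ell\ge0}(\eta')^{-\ell}\|Df^\ell v\|$ (along the pseudo-orbit) and its backward analogue for $F$, with respect to which $E$ is contracted and $F$ expanded at every step. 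What makes the resulting estimates uniform in $k$ and in the $n_i$ is that the indices where the pseudo-orbit fails to be a true orbit are precisely the string starts/ends, and there the adapted norm coincides with the Riemannian one.

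With this in hand I would run the usual graph-transform (Perron) argument: in the charts $\exp_{y_m}\colon B(0,r)\subset T_{y_m}M\to M$, write the sought orbit as $z_m=\exp_{y_m}(v_m)$ and solve $f(z_m)=z_{m+1}$ by finding a fixed point of the operator that determines the $E$-component of $v_m$ from the past and the $F$-component from the future. By the hyperbolic estimates above, the bounded-angle property, and the smallness of $r$, this operator contracts the complete metric space of $n$-periodic sequences $(v_m)$ with $\max_m\|v_m\|'_m\le r$; its unique fixed point gives $p=z_0$ with $f^n(p)=p$ and $dist(z_m,y_m)<\ep$, once $r$, and then $\delta$, are chosen small enough (depending only on $f,\La,\eta,\ep$). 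Since $f^{n_1+\dots+n_{j-1}+\ell}(p)=z_{n_1+\dots+n_{j-1}+\ell}$ and $y_{n_1+\dots+n_{j-1}+\ell}=f^\ell(x_j)$ for $0\le\ell\le n_j-1$ (the case $\ell=n_j$ being handled by $dist(f^{n_j}(x_j),x_{j+1})<\delta$), this gives the assertion.

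The main obstacle---and the only place where the full strength of the $\eta$-string hypothesis is really used rather than mere hyperbolicity along the pseudo-orbit---is obtaining a shadowing constant independent of the number of strings and of their lengths. This is exactly what the ``all intermediate times'' (Pliss-type) form of the $\eta$-string inequalities provides; without it the adapted metric could have distortion growing with the string lengths, and the argument would only yield a $\delta$ depending on $\max_i n_i$.
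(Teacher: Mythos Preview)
The paper does not supply its own proof of this theorem: immediately after the statement it only remarks that the case $k=1$ was proved by Liao and the general version by Gan, and then moves on. So there is nothing in the text to compare your argument against line by line.

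That said, your sketch is essentially the standard route (and in spirit the one in Gan's generalized shadowing lemma): concatenate the strings into a single periodic $\delta$-pseudo-orbit in $\La$, observe that the $\eta$-string inequalities give uniform hyperbolic estimates along it with respect to the dominated splitting $E\oplus F$, pass to an adapted metric, and run a contraction/fixed-point argument in exponential charts to produce the shadowing periodic orbit. Your identification of the crucial point is correct: uniformity in $k$ and in the $n_i$ comes from the fact that the $\eta$-string bounds control the $E$-products from \emph{each} string start forward and the $F$-co-norm products from each string end backward, so the ratio between the adapted and Riemannian norms is bounded independently of the combinatorics precisely at the junctions. One small imprecision: the adapted norm at a junction does not literally ``coincide'' with the Riemannian one; rather, it is uniformly comparable to it (the sup in your definition is attained at $\ell=0$ up to a factor depending only on $\eta/\eta'$ and on one cheap step at each jump). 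With that wording adjusted, the argument is sound.
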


When $k=1$, an explicit proof was given by Liao \cite{L2}. A proof of the generalized version was done by Gan \cite{Ga1}. The following theorem is a (simplified) version of a result proved by Ma\~n\'e \cite{M1} as an important step for the stability conjecture (Theorem II.1). See also \cite{BGY}. The proof we present here is  different from the one of Ma\~n\'e or \cite{BGY}, although it is based on the same ideas.

\begin{thm}\label{t.manehomoclinic}
Let $p$ be a hyperbolic periodic point. Assume that the homoclinic class $H(p)$ of $p$ has a dominated splitting $T_{H(p)}M=E^s\oplus F$ of the same index as the index of $p$ and where $E^s$ is uniformly contracted. Then, either the homoclinic class is hyperbolic or there are periodic points in the class with arbitrarily weak Lyapunov exponent along $F,$ or more precisely,  given $0<\gamma<1$ there exists a periodic $q\in H(p)$ such that
$$\gamma^{n(q)}<\prod_{j=0}^{n(q)-1}\|Df^{-1}_{/F_{f^{-j}(q)}}\|<1$$
where $n(q)$ is the period of $q.$

\end{thm}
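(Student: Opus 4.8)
Assume $H(p)$ is not hyperbolic, since otherwise we are in the first alternative. As $E^s$ is uniformly contracted by hypothesis, this means that $F$ is not uniformly expanded by $Df$, i.e.\ there is no $m$ with $\|Df^{-m}_{/F_x}\|\le\frac12$ for all $x\in H(p)$. Note that $p\in H(p)$ and, since the index of $p$ is $i$, the bundle $F_p$ is the unstable bundle of $\cO(p)$, so $\prod_{j=0}^{n(p)-1}\|Df^{-1}_{/F_{f^{-j}(p)}}\|<1$; if the prescribed $\gamma$ is so close to $1$ that $p$ itself fulfils the required inequality we simply take $q=p$, hence we may assume that $F$ is expanded along $\cO(p)$ at a definite rate $\lambda_p<\gamma<1$. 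It is convenient to argue by contradiction: fix $\gamma$ and suppose that no periodic $q\in H(p)$ satisfies $\gamma^{n(q)}<\prod_{j=0}^{n(q)-1}\|Df^{-1}_{/F_{f^{-j}(q)}}\|<1$. Every periodic point in $H(p)$ homoclinically related to $p$ has index $i$, hence automatically satisfies $\prod_{j=0}^{n(q)-1}\|Df^{-1}_{/F_{f^{-j}(q)}}\|<1$; so our assumption forces, for all such $q$, the inequality $\prod_{j=0}^{n(q)-1}\|Df^{-1}_{/F_{f^{-j}(q)}}\|\le\gamma^{n(q)}$, i.e.\ $F$ is \emph{uniformly} expanded ---at rate at most $\gamma$--- along every periodic orbit homoclinically related to $p$. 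The goal is to derive from this a contradiction with the failure of uniform expansion of $F$ on $H(p)$.

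By the empirical-measure and ergodic-decomposition argument used in the proof of the previous theorem, the failure of uniform expansion yields an ergodic $f$-invariant measure $\nu$ supported in $H(p)$ with $\int\log\|Df^{-1}_{/F}\|\,d\nu\ge0$; a generic point $z$ of $\nu$ is recurrent, lies in $H(p)$, and has $\prod_{j=0}^{n-1}\|Df^{-1}_{/F_{f^{-j}(z)}}\|$ of size $e^{o(n)}$ or larger, so that $F$ is barely (if at all) expanded along the orbit of $z$. Using that $H(p)$ is chain transitive, that $z\in H(p)$, and that every periodic orbit homoclinically related to $p$ is uniformly $F$-expanding, one would build long pseudo-orbits in which a segment shadowing a stretch of the orbit of $z$ is interleaved, in carefully chosen proportions, with the strongly expanding dynamics around $\cO(p)$ (and, if needed, around other periodic orbits homoclinically related to $p$). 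The proportions are adjusted so that: the pieces actually fed into the closing lemma of Theorem \ref{t.closing} are genuine $\eta$-strings with $\eta<1$ as close to $1$ as wished ---$E^s$ being uniformly contracted handles the $E$-half of the string condition, and Pliss' Lemma, applied on the pieces that do carry a net $F$-expansion, the $F$-half---; the periodic orbit thus produced passes arbitrarily close to $\cO(p)$, so that by the inclination lemma it is homoclinically related to $p$, whence it belongs to $H(p)$, has index $i$, and therefore $\prod_{j=0}^{n(q)-1}\|Df^{-1}_{/F_{f^{-j}(q)}}\|<1$; and its total $F$-expansion rate over a full period remains $>\gamma^{n(q)}$, because the bulk of its period is spent shadowing the weakly expanding orbit of $z$ while only a controlled fraction is spent near $\cO(p)$, and $\gamma^{n}$ decays strictly faster than the rate along $z$. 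Such a $q$ contradicts the standing assumption, so the second alternative holds.

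The main obstacle ---and the genuine technical heart of Ma\~{n}\'e's argument--- is precisely this construction. The orbit segments witnessing the non-uniform expansion of $F$ are by their nature \emph{not} $\eta$-strings and cannot be closed up directly into hyperbolic periodic orbits, so the whole difficulty is to interleave them with just enough of the strong hyperbolic behaviour near $\cO(p)$ that, after Pliss' Lemma, every segment handed to the closing lemma is a legitimate $\eta$-string, while keeping the accumulated expansion rate of the manufactured periodic orbit trapped in the narrow window $(\gamma^{n(q)},1)$ and keeping the orbit homoclinically related to $p$. Making all these competing estimates uniform ---and disposing of the borderline case in which the invariant measures detecting the failure can only be taken with exponent zero along $F$, which requires an extra limiting argument--- is where essentially all the work lies; the continuity of the bundles and the positive lower bound on the angle between $E^s$ and $F$ from Proposition \ref{p.dominated} are used throughout to control the shadowing errors.
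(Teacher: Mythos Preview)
Your strategy identifies the right ingredients---Pliss' Lemma, the closing lemma for $\eta$-strings, and the combination of ``weak'' with ``strong'' orbit segments---but the architecture you propose is genuinely different from the paper's, and the difficulty you flag in your last paragraph is exactly the one the paper's argument is designed to \emph{avoid} rather than confront.

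The paper never passes through an ergodic measure or a generic point $z$. Instead it works entirely inside a dense set $\cD\subset H(p)$ of periodic orbits that are uniformly $F$-expanding at some rate $\la_1<\gamma$. On each such orbit one applies Pliss' Lemma (to the cocycle $\|Df^{-1}_{/F}\|$, with thresholds $\gamma_1<\gamma_2$ chosen in $(\gamma,1)$) to extract ``Pliss times'' $q_1,\dots,q_{m_q}$ along the orbit. The crucial observation is twofold: (a) any segment joining two Pliss times is \emph{automatically} a $\gamma_2$-string in the $F$-direction, because the string condition is precisely the backward Pliss inequality from the endpoint; (b) if the gaps between consecutive Pliss times were uniformly bounded over all of $\cD$, the class would be hyperbolic. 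Hence non-hyperbolicity produces arbitrarily long gaps $m_{i_n}\to\infty$, and on such a long gap the product $\prod\|Df^{-1}_{/F}\|$ must exceed $\gamma_1^{m_{i_n}}$ (else Pliss would insert an extra Pliss time inside). So the ``weak'' segments one needs are found \emph{inside} periodic orbits of $\cD$ and are already legitimate $\gamma_2$-strings. One then takes two such configurations at scales $n_0\ll n_1$, whose Pliss endpoints accumulate on the same pair of points, and feeds the short complementary string from $q(n_0)$ together with the long gap string from $q(n_1)$ into Theorem~\ref{t.closing}. The resulting periodic point has overall $F$-rate trapped between $\gamma$ and $1$ because the long segment dominates and its rate lies in $(\gamma_1,\gamma_2)$, while the short segment contributes only a bounded factor.

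Your route---extract a weak orbit from an ergodic measure with nonnegative exponent, then pad it with time near $\cO(p)$---forces you to manufacture strings out of segments that are, as you say, inherently not strings; balancing ``enough padding to be a string'' against ``not so much padding that the rate drops below $\gamma$'' is delicate and is not obviously resolvable by Pliss applied to the strong pieces alone. The paper's trick of locating the weakness as long gaps between Pliss times on already-periodic, already-expanding orbits is what dissolves this tension.

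One smaller point: your claim that a periodic $q$ of index $i$ ``automatically satisfies $\prod_{j=0}^{n(q)-1}\|Df^{-1}_{/F_{f^{-j}(q)}}\|<1$'' conflates spectral radius with norm; having all eigenvalues of $Df^{n(q)}_{/F_q}$ outside the unit circle does not force the product of the step-by-step norms of the inverses to be $<1$. The paper sidesteps this by building $\cD$ from periodic orbits that spend most of their time near $\cO(p)$, where the pointwise bound $\|Df^{-1}_{/F}\|<\la$ holds, so the product bound is immediate.
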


\begin{proof}[Sketch of proof]
Since $p$ is hyperbolic and $index(p)=dim E^s$ we have (perhaps changing the riemannian metric) that $\|Df^{-1}_{/F_{f^j(p)}}\|<\la<1$ for some $0<\la<1.$ (We also may assume that $\|Df_{/E_x}\|<\la$ for any $x\in H(p)$ since $E^s$ is uniformly contracted). From this, it is not difficult to find a dense set $\cD$ in $H(p)$ consisting of periodic orbits such that for any $q\in\cD$ we have $\prod_{j=0}^{n(q)-1}\|Df^{-1}_{/F_{f^{-j}(q)}}\|<\la_1^{n(q)}$ for some $\la<\la_1<1.$ Now, fix any $\gamma$ such that  $\la_1<\gamma<1$ and assume the homoclinic class is not hyperbolic.  Choose $\gamma<\gamma_1<\gamma_2<1.$ From Pliss' Lemma one gets that for every $q\in \cD$ there are points $q_i$ in the orbit of $q$ such that
\begin{equation}\label{e.pliss}
\prod_{j=0}^{n-1}\|Df^{-1}_{/F_{f^{-j}(q_i)}}\|\le \gamma_2^n \mbox{ for any } n\ge 1.
\end{equation}
For each $q\in\cD$ consider all the points $q_1,q_2,...,q_{m_q}=q_1$ that satisfies \eqref{e.pliss} and ordered following the orbit of $q.$ Notice that for any $q_i, q_j$ both $(q_i,q_j)$ and $(q_j,q_i)$ are $\gamma_2$-strings. Now lets look at the orbit distance between $q_i$ and $q_{i+1}$, that is $m_i,$ where $f^{m_i}(q_i)=q_{i+1}.$ If $m_i$ is uniformly bounded for any $i$ and for every $q\in\cD$ we conclude that $H(p)$ is uniformly hyperbolic. Thus, we may find a sequence $q(n)$ in $\cD$ and points $q_{i_n}(n), q_{i_n+1}(n)$ (in the orbit of $q(n)$) whose orbit distance goes to infinity. And so, for $n$ large enough, we get from Pliss Lemma that $\prod_{j=0}^{m_i-1}\|Df^{-1}_{/F_{f^{-j}(q_{i_n+1}(n))}}\|> \gamma_1^{m_i}$ (otherwise we have a point satisfying \eqref{e.pliss} between $q_{i_n}(n)$ and $q_{i_n+1}(n)).$

Now we may assume that $q_{i_n}(n)$ and $q_{i_n+1}(n)$ converge to points $x$ and $y$. Let $\ep>0$ be very small and let $\delta$ form the above closing lemma. Fix some $n_0$ such that $q_{i_{n_0}}(n_0)$ and $q_{i_{n_0+1}}(n_0)$ are at distance less than $\delta/2$ of $x$ and $y.$ And then, choose $n_1$ much larger than $n_0$ so that $q_{i_{n_1}}(n_1)$ and $q_{i_{n_1+1}}(n_1)$ are at distance less than $\delta/2$ of $x$ and $y.$  Notice that the $\gamma_2$ strings $(q_{i_{n_0+1}}(n_0),q_{i_{n_0}}(n_0))$ and $(q_{i_{n_1}}(n_1),q_{i_{n_1+1}}(n_1))$ are as in the Theorem \ref{t.closing}  with $k=2.$ Choose $0<c<1$ such that $\gamma<(1-c)\gamma_1<(1+c)\gamma_2<1$ and set $N=m_{i_{n_0}}+m_{i_{n_1}}$ and let $z$ be the periodic point that shadows these two strings by Theorem\ref{t.closing}, $f^N(z)=z$.

Let $C=\sup_{x\in H(p)}\|Df^{-1}_{/F_x}\|^{-1}.$ If $n_1$ was chosen large enough then we have $$C^{m_{i_{n_0}}}\left((1-c)\gamma_1\right)^{m_{i_{n_1}}}>\gamma^N.$$ Thus, if $\epsilon$ was small enough we conclude that
$\gamma^{N}<\prod_{j=0}^{N-1}\|Df^{-1}_{/F_{f^{-j}(z)}}\|.$ On the other hand $\prod_{j=0}^{n-1}\|Df^{-1}_{/F_{f^{-j}(z)}}\|\le ((1+c)\gamma_2)^n$ for $1\le n\le N.$ This implies that $z$ has a uniform unstable manifold and therefore, and if $\ep$ and $\delta$ were chosen  sufficiently small, we have a heteroclinic intersection between $z$ and $q_{i_{n_0+1}}(n_0)$ and so $z$ belongs to the homoclinic class $H(p).$ . Since $\gamma$ was arbitrary we deduce the existence of periodic points with arbitrarily weak Lyapunov exponent along $F$.

\end{proof}

\begin{prob}\label{prob.weak}
Let $p$ be a hyperbolic periodic point and let $H(p)$ be its homoclinic class. Assume that $Hp)$ has Dominated Splitting $T_{H(p)}M=E\oplus F$ of index equal to the index of $p.$ Is it true that if  neither $E$ is uniformly contracting nor $F$ is uniformly expanding, there are periodic points in the class having a weak Lyapunov exponent along $E$ and periodic points having weak Lyapunov exponent along $F$? (See Problem I.8 of \cite{CSY})
\end{prob}

A partial answer to the above Problem can be given from the results in \cite{CSY}

\begin{thm}
Let $f\in\Di^1(M)$ be a generic diffeomorphism. Let $p$ be a hyperbolic periodic point and assume that the homoclinic class $H(p)$ has dominated splitting $T_{H(p)}=E^s\oplus E^c\oplus F$ where $E^s$ is uniformly contracted, $E^c$ is one dimensional and not uniformly contracted, and $dim(E^s\oplus E^c)$ is equal to the index of $p.$ Then, for any $\delta >0$ there exists periodic points in $H(p)$ whose Lyapunov exponent along $E^c$ belongs to $(-\delta,0).$
\end{thm}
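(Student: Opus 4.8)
The plan is to obtain the desired periodic orbits as orbits whose empirical measures approximate a conveniently chosen invariant measure supported in $H(p)$. The key observation is that, $E^c$ being one dimensional, the Lyapunov exponent of an orbit along $E^c$ is just the integral of the \emph{continuous} function $\log\|Df_{/E^c}\|$ against its empirical measure, so it depends affinely and continuously (in the weak--$\ast$ topology) on the measure. One then produces two invariant measures in $H(p)$ with central exponents of opposite sign and interpolates.

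More precisely, first I would exhibit such a pair of measures. Since $E^c$ is not uniformly contracted over $H(p)$, the standard argument used in the proof of the theorem right after Theorem~\ref{t.ergodic} yields a point $x_0\in H(p)$ with $\prod_{j=0}^{n-1}\|Df_{/E^c_{f^j(x_0)}}\|\ge\tfrac12$ for all $n\ge0$; any weak--$\ast$ accumulation point of $\tfrac1n\sum_{j=0}^{n-1}\delta_{f^j(x_0)}$ is supported in $H(p)$ and integrates $\log\|Df_{/E^c}\|$ to a nonnegative number, so by the ergodic decomposition there is an ergodic $\mu$ with $\operatorname{supp}\mu\subset H(p)$ and $\int\log\|Df_{/E^c}\|\,d\mu\ge0$. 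On the other hand let $\mu_p$ be the periodic measure on $\mathcal O(p)$: since $p$ is hyperbolic and $\dim(E^s\oplus E^c)$ equals its index, uniqueness of the invariant subbundles of a dominated splitting forces $(E^s\oplus E^c)_p$ to be the stable subspace of $Df^{n(p)}_p$, so $\lambda:=\int\log\|Df_{/E^c}\|\,d\mu_p<0$. If $-\delta<\lambda$ then $p$ itself already has central exponent in $(-\delta,0)$, so we may assume $\delta\le|\lambda|$.

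Now interpolate: for $t\in[0,1]$ the measure $\nu_t=t\mu+(1-t)\mu_p$ is $f$--invariant and supported in $H(p)$, and $t\mapsto\int\log\|Df_{/E^c}\|\,d\nu_t=t\int\log\|Df_{/E^c}\|\,d\mu+(1-t)\lambda$ is continuous, equal to $\lambda<0$ at $t=0$ and $\ge0$ at $t=1$; hence there is $t^\ast$ with $\int\log\|Df_{/E^c}\|\,d\nu_{t^\ast}=-\delta/2\in(-\delta,0)$. Here genericity of $f$ enters, through the kind of statement established in \cite{CSY} (resting in turn on the connecting lemma for pseudo--orbits \cite{BC} and on the localization of the Ergodic Closing Lemma to a homoclinic class, compare \cite{C1}): for $C^1$--generic $f$, every $f$--invariant probability measure supported in $H(p)$ is a weak--$\ast$ limit of periodic measures $\mu_{q_n}$ with $\mathcal O(q_n)\subset H(p)$. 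Applying this to $\nu_{t^\ast}$ produces periodic points $q_n\in H(p)$ with $\mu_{q_n}\to\nu_{t^\ast}$, whence the Lyapunov exponent of $q_n$ along $E^c$, which equals $\int\log\|Df_{/E^c}\|\,d\mu_{q_n}$, converges to $-\delta/2$; for $n$ large it lies in $(-\delta,0)$, as required.

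The main obstacle is precisely this last step. One would like to build $q_n$ by hand, concatenating a long arc shadowing $\mu$ with a long arc shadowing $\mathcal O(p)$ and closing up by the Anosov Closing Lemma (Theorem~\ref{t.closing}); but along the $\mu$--arc the bundle $E^s\oplus E^c$ is not contracted, so those arcs are not $\eta$--strings for the splitting of index equal to that of $p$, and Theorem~\ref{t.closing} does not apply. Bypassing this obstruction is exactly what the generic approximation of invariant measures by periodic orbits inside the class accomplishes — a perturbation--plus--Baire argument rather than a shadowing argument for the fixed $f$ — and this is the substantive ingredient imported from \cite{CSY}.
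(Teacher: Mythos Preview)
The paper does not give its own proof of this statement; it merely records it as a consequence of results in \cite{CSY}. So there is no argument in the paper to compare yours against line by line.

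Your reduction is sound and is essentially how one argues in the $C^1$-generic literature: the central exponent is the integral of the continuous function $\log\|Df_{/E^c}\|$ and hence an affine, weak-$*$-continuous functional on measures; the existence of an ergodic $\mu$ on $H(p)$ with nonnegative central exponent follows from the usual compactness/ergodic-decomposition argument; and $\mu_p$ has strictly negative central exponent by uniqueness of the dominated splitting. Interpolating gives $\nu_{t^\ast}$ with exponent $-\delta/2$.

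You are also right that the closing step is the whole content, but two remarks sharpen this. First, the statement ``for $C^1$-generic $f$, every invariant measure on $H(p)$ is approximated by periodic measures in $H(p)$'' is true, yet it is not a soft corollary of the Ergodic Closing Lemma. For \emph{ergodic} measures it follows from \cite{ABC} together with the generic identification of chain classes with homoclinic classes \cite{BC}; for the non-ergodic $\nu_{t^\ast}$ one additionally needs the closure of the periodic measures in $H(p)$ to be \emph{convex}. If the relevant periodic orbits all had the index of $p$ this would be an elementary shadowing exercise, but the periodic approximants of $\mu$ may well have index $\dim E^s$ (their central exponent being close to a nonnegative number), one less than the index of $p$; mixing orbits of adjacent indices inside the class is exactly the transition argument carried out in \cite{ABCDW} and refined in \cite{CSY}. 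So the substantive work sits precisely in the black box you invoke, not in the linear-interpolation step on the simplex of measures. Second, your diagnosis of why Theorem~\ref{t.closing} fails here is correct and is exactly why a Baire/perturbation input is unavoidable: the class contains orbits along which $E^s\oplus E^c$ is not eventually contracted, so one cannot close by shadowing for the fixed $f$ alone.
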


We can refine Problems \ref{prob.index} and \ref{prob.weak} and ask the following:

\begin{prob}\label{prob.stableindex}
Let $H(p)$ be a homoclinic class of a hyperbolic periodic point having a Dominated Splitting $T_{H(p)}=E^s\oplus E^c_1\oplus...\oplus E^c_k\oplus E^u$ where $E^s$ is uniformly contracted and $E^u$ is uniformly expanded and $E^c_i$ are one dimensional and neither $E^c_1$ is uniformly contracted nor $E^c_k$ is uniformly expanded. Do there exist periodic points $q_1, q_2$ in $H(p)$ such that the index of $q_1$ is $dim E^s$ and the index of $q_2$ is $dim E^s+k$? To avoid some  counterexamples one should assume that $f$ satisfies some $C^r$ generic condition (see the end of Section \ref{s.extremal}).
\end{prob}

We remark that from \cite{CSY} the indices $dimE^s+1$ and $dimE^s+k-1$ are realized (and by \cite{ABCDW} the indices between both are realized as well). The above problem admits a more general formulation (and more difficult?) when the class $H(p)$ admits a Dominated Splitting $T_{H(p)}=E^s\oplus E^c \oplus E^u$ where $E^s$ and $E^u$ are maximal (i.e., $H(p)$ does not admit a dominated splitting $\tilde{E}^s\oplus\tilde{E}^c\oplus\tilde{E}^u$ with $E^s\subset \tilde{E}^s$ and $E^u\subset \tilde{E}^u$). A positive answer to the above problem will represent a fundamental step towards the Palis' conjecture or Bonatti's conjecture on finiteness of chain-recurrent classes far from tangencies (see \cite{B}).

\section{Intermediate one dimensional subbundle}

Let $f:M\to M$ be a diffeomorphism and let $\La$ be a compact invariant set having a dominated splitting $T_\La=E\oplus E^c\oplus F$ where $E^c$ is one dimensional. Can we say something about the dynamics of $\La?$ Even in the case where $E=E^s$ is uniformly contracting and $F=E^u$ is uniformly expanding this is a difficult question.

\begin{prob}\label{prob.finit}
Assume that $f\in\Di^r(M)$ is $C^r$ generic and partially hyperbolic $TM=E^s\oplus E^c \oplus E^u$ with $E^c$ one dimensional (and $f$ is not Anosov). Is it true that the chain recurrent set $\cR(f)$ has finitely many chain-recurrent classes? What if $TM=E^s\oplus E^c_1\oplus\ldots\oplus E^c_k\oplus E^u$ with $E^c_i$ one dimensional?
\end{prob}

Related problems to the above are for instance: Let $f$ be time one map of an Anosov flow, is it true that any  $C^r$ generic perturbation  has finitely many chain-recurrent class? Is any $C^r$ perturbation transitive?

There are two main reasons to consider dominated splitting with one-dimensional subbundle. On one hand this is the situation when we are far from homoclinic tangencies (see \cite{PS2}, \cite{We1}, \cite{C2}). On the other one, although the action of $Df$ along $E^c$ might be neutral, since it one dimensional, one could expect to have a description on the dynamics in the $E^c$ direction. This is the content of a basic tool in the area known as Crovisier's Center Models and developed in \cite{C2} and \cite{C3}. This tool was used to prove the major results on the Palis' conjecture (\cite{C3}, \cite{C2} and \cite{CP}) and the Bonatti's conjecture (\cite{B}) on finiteness of chain-recurrent classes far from tangencies  as well (see \cite{CSY}).

Let us briefly explain this tool (we recommend to look at \cite{C1} for this and many applications as well). By the theory in \cite{HPS} one knows that there are a family of one dimensional manifolds $\cW^c(x)$ for $x\in \La$ (center plaques) such that $T_x\cW^c(x)=E^c_x$ and are locally invariant, that is, $f(\cW^c(x))$ contains a neighborhood of $f(x)$ within $\cW^c(f(x)).$ This allows to lift the dynamics to $T_\La E^c$.

\begin{defi}
A center model $(\hat{\La},\hat{f})$ associated to a compact invariant set $\La$ and $f$  with $T_\La=E\oplus E^c\oplus F$ where $dim E^c=1$ is a compact set $\hat{\La},$ a continuous map $\hat{f}:\hat{\La}\times [0,+\infty)$ and a map $\pi:\hat{\La}\times [0,+\infty)\to M$ such that:
\begin{itemize}
\item $\hat{f}(\hat{\La}\times\{0\})=\hat{\La}\times \{0\}.$
\item $\hat{f}$ is a local homeomorphism of a neighborhood of $\hat{\La}\times\{0\}.$
\item $\hat{f}$ can be written as $\hat{f}(x,t)=(\hat{f}_1(x),\hat{f}_2(x,t)).$
\item $\pi(\hat{\La}\times\{0\})=\La.$\
item $\pi\circ\hat{f}=f\circ \pi.$
\item The maps $t\to \pi(\hat{x},t)$ forms a family of $C^1$ embedding of $[0,+\infty)$ in $M.$
\item The curve $\pi(\hat{x}\times [0,+\infty))$ is tangent to $E^c$ at $\pi(\hat{x},0).$
\end{itemize}
\end{defi}

One can consider two cases: the orientable case where there exists an orientation of $E^c$ invariant by $Df$ and where there is not.

\begin{thm}[\cite{C2},\cite{C3}]\label{t.central}
Let $\La$ be a compact, invariant and chain-transitive set with Dominated Splitting $T_\La=E\oplus E^c\oplus F$ where $dim E^c=1.$ Then there exists a center model $(\hat{\La},\hat{f})$ associated to $(\La, f)$ where $\hat{\La}$ is chain-transitive. In the orientable case, $\pi_{/\hat{\La}\times{0}}$ is a homeomorphism and $\pi(\{\hat{x}\}\times[0,+\infty))$ is compatible with the orientation on $E^c,$ and in the non-orientable case $\hat{\La}$ coincides with $T^1_\La E^c.$ Moreover, one of the following cases holds:
\begin{itemize}
\item[Type(R):] (Chain-recurrent) For every $\ep>0$ there exists a point $x\in\La$ and an arc $\gamma, x\in\gamma\subset\cW^c(x)$ such that the length of $f^n(\gamma)$ is bounded by $\ep$ and one can go from $\gamma$ to $\La$ and viceversa with arbitrarily small chains  contained in a neighborhood of $\La.$
\item[Type(N):] (Neutral) There exist a base of attracting open neighborhoods $U$ of the zero section (i.e., $\hat{f}(\overline{U})\subset U$ and a base of repelling open neighborhoods.
\item[type(H):] (Hyperbolic) There exists a base of attracting (respect. repelling) open neighborhoods of the zero section and $\cW^c$ is contained in the chain stable (respect. unstable) set of $\La.$
\item[Type(P):] (Parabolic) In this case we are in the orientable case and on each ``side'' we have type (N) or (H). Thus, three subcases appear: $P_{SU}, P_{NS}, P_{NU}$ depending on which cases appear on $E^{c,+}$ and $E^{c,-}.$

\end{itemize}
\end{thm}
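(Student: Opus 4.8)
The plan is to construct the center model by ``unfolding'' the family of center plaques onto a half-line bundle, and then to extract the four cases from Conley's attractor--repeller theory applied to the resulting skew product near its zero section. The plaque family itself is already available: as recalled just before the definition of the center model, the theory of \cite{HPS} gives, for the one-dimensional dominated subbundle $E^c$, a continuous family of locally invariant $C^1$ center plaques $\cW^c(x)\ni x$, $x\in\La$, with $T_x\cW^c(x)=E^c_x$ and $f(\cW^c(x))\supset$ a neighbourhood of $f(x)$ in $\cW^c(f(x))$. From this I would build $(\hat\La,\hat f,\pi)$ as follows. In the orientable case, fix a $Df$-invariant orientation of $E^c$, which splits each plaque into a positive and a negative half $E^{c,+},E^{c,-}$, and set $\hat\La=\La$; in the non-orientable case set $\hat\La=T^1_\La E^c$. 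Parametrize each relevant half-plaque by $[0,+\infty)$ (arclength, suitably rescaled so the parametrizations vary continuously); local invariance of the plaque family then defines, on a neighbourhood of the zero section, a local homeomorphism $\hat f(\hat x,t)=(\hat f_1(\hat x),\hat f_2(\hat x,t))$ together with $\pi(\hat x,t)=$ the plaque point at parameter $t$. All the listed properties of $(\hat\La,\hat f,\pi)$ --- skew-product form, $\pi\circ\hat f=f\circ\pi$, $C^1$ embeddings of $[0,+\infty)$, tangency to $E^c$ at the base --- are then direct consequences of the construction.

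Next I would check that $\hat\La$ is chain-transitive. In the orientable case $\pi|_{\hat\La\times\{0\}}$ is a homeomorphism conjugating $\hat f_1$ to $f|_\La$, so chain transitivity is inherited verbatim, and the orientation of $E^c$ makes $\pi(\{\hat x\}\times[0,+\infty))$ compatible with it by construction. In the non-orientable case $\hat\La\to\La$ is a connected double cover: given $\hat x,\hat y\in\hat\La$, an $\ep$-chain in $\La$ from $\pi(\hat x)$ to $\pi(\hat y)$ lifts, using continuity of the plaque family, to a chain in $\hat\La$ from $\hat x$ to one of the two preimages of $\pi(\hat y)$; non-orientability of $E^c$ over the chain-transitive set $\La$ yields an orientation-reversing pseudo-loop, whose lift joins any point of $\hat\La$ to its image under the deck transformation, and concatenating corrects the sheet. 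That $\hat\La=T^1_\La E^c$ holds by construction.

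The heart of the proof is the case analysis. I would apply Conley theory to the local dynamics of $\hat f$ on $\hat\La\times[0,\ep)$ for $\ep$ small, treating each side of $\hat\La$ separately. The basic dichotomy is: either the chain-recurrence class of the zero section inside every small neighbourhood contains points with arbitrarily small positive $t$-coordinate --- which, read back through $\pi$ and the parametrization, is precisely the existence of arcs $\gamma\subset\cW^c(x)$ with $f^n(\gamma)$ uniformly short that chain to $\La$ and back inside a neighbourhood of $\La$, i.e. Type (R) --- or the zero section is an isolated chain-transitive set locally, in which case Conley's attractor--repeller decomposition furnishes, on each side, a base of attracting neighbourhoods ($\hat f(\overline{U})\subset U$), or a base of repelling ones, or both. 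An ``attracting'' or ``repelling'' side is Type (H), and then forward (resp. backward) orbits of nearby points converge to the zero section, so $\cW^c$ lies in the chain-stable (resp. chain-unstable) set of $\La$; ``both'' is Type (N). In the non-orientable case the deck transformation interchanges the two sides, so they behave identically, yielding Type (N) or (H); in the orientable case the sides are independent, and when they disagree one lands in Type (P), with $P_{SU},P_{NS},P_{NU}$ recording which of (N)/(H) occurs on $E^{c,+}$ and $E^{c,-}$.

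I expect the last step to be the main obstacle, for two reasons. First, $\hat f$ is only a local homeomorphism near the zero section of a non-compact space, so Conley's machinery has to be applied to a suitable local invariant set (or to an artificial compactification), with care that the attracting and repelling neighbourhoods one produces genuinely form a base at the zero section, not merely a single isolating block. Second, translating the abstract chain-recurrence alternative back into the geometric statement of Type (R) --- exhibiting honest short center arcs and chains that stay in a prescribed neighbourhood of $\La$ --- requires combining the bounded geometry and continuity of the plaque family with a compactness argument over $\La$. By contrast, the construction of the model and the lifting argument in the non-orientable case are comparatively routine, though the orientation-reversing-loop observation must be made precise.
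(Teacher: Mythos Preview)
The paper does not prove this theorem: it is stated with attribution to \cite{C2},\cite{C3} and no proof or sketch follows in the text, so there is nothing in the paper to compare your proposal against. That said, your outline is essentially the argument of Crovisier in those references: construct the plaque family from \cite{HPS}, lift to a half-line bundle (identifying $\hat\La$ with $\La$ in the orientable case and with the unit bundle $T^1_\La E^c$ in the non-orientable case), verify chain-transitivity of $\hat\La$ via the double-cover argument, and then run an attractor--repeller analysis of the zero section to obtain the four types. The two difficulties you flag --- that $\hat f$ is only a local homeomorphism so Conley theory must be applied with care, and that the abstract chain-recurrence alternative must be translated into the concrete Type~(R) statement about short center arcs --- are real and are precisely where Crovisier's proof does its work; your sketch identifies them correctly but does not resolve them, so this remains a plan rather than a proof.
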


As we said before, using the central models one can get many interesting result. Just to give a rough idea let's see the following:

\begin{thm}[\cite{C2},\cite{C3}]
Let $f\in\Di^1$ be a $C^1$ generic diffeomorphism and let $\La$ be a compact invariant chain-transitive set with Dominated Splitting $T_\La=E^s\oplus E^c\oplus E^u$ where $E^s$ is uniformly contracted, $E^u$ is uniformly expanded and $dim E^c=1$ and of Type(R). Then, $\La$ is contained in the chain-recurrent class of a periodic point.
\end{thm}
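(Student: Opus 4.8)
Since $\La$ is chain--transitive and carries a dominated splitting whose centre bundle $E^c$ is one--dimensional, Theorem \ref{t.central} applies, and as $\La$ is assumed to be of Type (R) we obtain: for every $\ep>0$ there are a point $x_\ep\in\La$ and a non--trivial arc $\gamma_\ep\subset\cW^c(x_\ep)$ through $x_\ep$ such that the iterates $f^n(\gamma_\ep)$, $n\ge 0$, all have length at most $\ep$, and one can go from $\gamma_\ep$ to $\La$ and back by $\ep$--chains contained in a small neighbourhood $U_\ep$ of $\La$. The plan is to promote this picture to a genuine periodic orbit of $f$ lying in the chain--recurrent class of $\La$ (a single one already proves the statement), by letting $\ep\to 0$.

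First I would record two elementary consequences. Because the centre plaques are locally invariant and the arcs $f^n(\gamma_\ep)$ stay short, the forward orbit of an endpoint $y$ of $\gamma_\ep$ remains within $\ep$ of the forward orbit of $x_\ep\in\La$; in particular it never leaves $U_\ep$ and it accumulates on $\La$. Splicing this genuine piece of orbit together with the $\ep$--chains furnished by Type (R) (from a point of $\La$ into $\gamma_\ep$, then from the forward orbit of $y$ back to $\La$, then a short chain inside the chain--transitive set $\La$) produces, for each $\ep>0$, a periodic $\ep$--pseudo--orbit $\Gamma_\ep$ contained in $U_\ep$; moreover, using once more the Type (R) chains and the chain--transitivity of $\La$, the set $\La\cup\Gamma_\ep$ is itself chain--transitive, so that $\La$ is contained in the chain--recurrent class $\mathcal{C}$ of $\Gamma_\ep$. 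It then remains only to replace $\Gamma_\ep$ by a true periodic orbit of $f$ without leaving $\mathcal{C}$. This is where the $C^1$--genericity of $f$ enters: one applies the connecting lemma for pseudo--orbits \cite{BC} to $\Gamma_\ep$ and runs the resulting $C^1$--small perturbations, which are supported in $U_\ep$, through a Baire--category argument over a countable basis of $\Di^1(M)$ (equivalently, one uses the generic coincidence of $\overline{Per(f)}$, $\Omega(f)$ and $\cR(f)$ together with the fact that generic chain classes are Hausdorff limits of periodic orbits); this yields, for the given $f$, periodic orbits $O_\ep\subset U_\ep$ lying in $\mathcal{C}$ and Hausdorff--converging to $\La$. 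Since $O_\ep$ is a periodic orbit inside $\mathcal{C}$, the class $\mathcal{C}$ is the chain--recurrent class of the periodic point $O_\ep$, and $\La\subset\mathcal{C}$, as claimed.

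The genuinely delicate step---and the real content of the theorem---is the last one: a priori the periodic orbits approaching $\La$ could all belong to neighbouring, possibly aperiodic, chain classes rather than to $\mathcal{C}$, in which case $\La$ would \emph{not} be contained in the chain class of any periodic point. Ruling this out is precisely where Type (R) is indispensable: the uniform boundedness of the centre arcs $f^n(\gamma_\ep)$, combined with the genuine hyperbolicity of $E^s$ and $E^u$ (which supply honest local stable and unstable manifolds and hence full control of all directions transverse to the centre), is what makes the periodic pseudo--orbit $\Gamma_\ep$ robust enough that the connecting--lemma perturbation---and therefore the limit periodic orbit---can be kept inside the chain class of $\La$; in fact it is what forces a genuine ``crossing'' of the recurrent centre plaque that closes the orbit up in the $E^c$ direction. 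Making this precise is the substance of \cite{C2},\cite{C3}.
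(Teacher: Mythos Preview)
Your outline diverges from the paper's argument at exactly the place you yourself flag as ``delicate,'' and it does not supply the missing mechanism. The paper's sketch is short but geometrically explicit: since $E^s$ and $E^u$ are genuinely hyperbolic, every point whose orbit stays near $\La$ carries uniform strong stable and unstable manifolds $\cW^s,\cW^u$. One then \emph{saturates the chain--recurrent centre arc} $\gamma$ by these foliations: $\bigcup_{c\in\gamma}\cW^s(c)$ and $\bigcup_{c\in\gamma}\cW^u(c)$ are topological manifolds of dimensions $\dim E^s+1$ and $\dim E^u+1$, contained respectively in the chain--stable and chain--unstable sets of $\La$. Genericity (via \cite{BC},\cite{C4}) now produces a periodic orbit $\cO$ with a point $p$ close to the \emph{middle} of $\gamma$; by the dimension count, the strong manifolds $\cW^s(p)$ and $\cW^u(p)$ (of dimensions $\dim E^s$, $\dim E^u$) must intersect those saturated blocks transversally. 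That intersection is precisely a chain from $\La$ to $p$ and one from $p$ back to $\La$, putting $p$ in the chain class of $\La$.

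By contrast, your route---build a periodic $\ep$--pseudo--orbit $\Gamma_\ep$, close it by the connecting lemma, and run a Baire argument---only yields periodic orbits Hausdorff--close to $\La$; it does not, as you correctly observe, force them into the chain class of $\La$. Invoking ``robustness'' of $\Gamma_\ep$ or a ``crossing in the $E^c$ direction'' is a gesture toward the right phenomenon, but the actual device that realises it is the saturation of $\gamma$ by the strong foliations and the transversal intersection with the strong manifolds of $p$. Without that geometric step (or an equivalent one) the argument is incomplete: the connecting--lemma/Baire package alone cannot distinguish your situation from one where $\La$ is an aperiodic class approximated by periodic orbits in nearby classes. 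I would rewrite the proof around the $\cW^s/\cW^u$--saturation of $\gamma$ rather than around pseudo--orbit closing.
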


\begin{proof}[Sketch of Proof:]
Every point whose orbit remains in a neighborhood of $\La$ has a uniform (strong) stable $\cW^s$ and unstable $\cW^u$ manifolds (tangent to $E^s$ and $E^u$). Let $\gamma$ be a chain recurrent segment. Then $\cup_{c\in\gamma}\cW^s(x)$  and $\cup_{c\in\gamma}\cW^u(x)$ defines (topological) manifolds of dimension $dimE^s+1$ and $dimE^u+1$ and contained respectively in the chain stable and chain unstable set of $\La.$  Since $f$ is generic (see \cite{BC} and \cite{C4}) one find a periodic orbit $\cO$ whose orbit remains in a neighborhood of $\La$ and there is $p\in\cO$ close to the middle point of $\gamma.$ Thus, the strong stable and unstable manifolds at $p$ intersects the chain unstable and chain stable set of $\La.$
\end{proof}

The following result is a consequence of Theorem 1.2 of \cite{CSY}:

\begin{thm}
Let $f$ be a $C^1$ generic diffeomorphism and let $\La$ be a compact invariant chain transitive set having a Dominated Splitting  $T_{\La}M=E^s\oplus E^c_1\oplus\ldots E^c_k\oplus E^u$ where $E^s$ is uniformly contracted, $E^u$ is uniformly expanded and $dim E^c_i=1, i=1,\ldots, k.$ And neither $E^c_1$ is uniformly contracted nor $E^c_k$ is uniformly expanded. Then one of the following holds:
\begin{itemize}
\item $k=1$ and the Lyapunov exponent along $E^c_1$ of every ergodic measure supported in $\La$ is zero.
\item $\La$ is contained in the chain-recurrent class of a periodic point.
\end{itemize}
\end{thm}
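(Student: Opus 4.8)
The plan is to run a dichotomy according to the Crovisier center model of Theorem~\ref{t.central} applied to each one-dimensional central bundle $E^c_i$, and to turn a nonzero Lyapunov exponent along some $E^c_i$ into a hyperbolic periodic point whose chain-recurrent class absorbs $\La$. First I would treat the case $k=1$. If every ergodic measure supported on $\La$ has zero Lyapunov exponent along $E^c_1$, we are in the first alternative and there is nothing to prove; so assume there is an ergodic measure $\nu$ with $\int\log\|Df_{/E^c_1}\|\,d\nu\neq 0$, say $<0$ (the other sign is symmetric, exchanging $f$ with $f^{-1}$ and $E^s$ with $E^u$). Applying Theorem~\ref{t.central} to $(\La,f)$ with the intermediate bundle $E^c_1$, the chain-transitive set $\La$ falls into one of the types (R), (N), (H) or (P). In type (R) the previous theorem (applied with $E^s$ replaced by $E^s$ and $E^u$ by $E^c_1\oplus E^u$, after noting that the negative exponent makes $E^s\oplus E^c_1$ contracted on a large-measure set) gives that $\La$ is contained in the chain-recurrent class of a periodic point; the point is that a chain-recurrent central segment $\gamma$ together with $\bigcup_{x\in\gamma}\cW^s(x)$ and $\bigcup_{x\in\gamma}\cW^u(x)$ produces topological chain-stable and chain-unstable manifolds of complementary dimension, and genericity (Connecting Lemma for pseudo-orbits, \cite{BC}, \cite{C4}) yields a periodic orbit near the middle of $\gamma$ whose invariant manifolds cross them.

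In the remaining types the idea is that a neutral or hyperbolic behaviour on the $E^c_1$ side is incompatible with a strictly negative exponent: if the center model is of type (H) with $\cW^c$ in the chain-stable set, then $E^s\oplus E^c_1$ is ``chain-contracted'' and one promotes this to genuine uniform contraction of $E^s\oplus E^c_1$ over a sub-class carrying $\nu$, so by Ma\~n\'e's ergodic closing lemma (Theorem~\ref{t.ergodic}) and Franks' Lemma one finds, after a $C^1$ perturbation inside a prescribed neighbourhood, a hyperbolic periodic point of index $\dim E^s+1$ whose orbit shadows a $\nu$-generic orbit; since $f$ is $C^1$ generic the unperturbed $f$ already has such a periodic point arbitrarily close to $\La$ in the Hausdorff topology, and arguing as in the sketch of Theorem~\ref{t.manehomoclinic} the chain-stable/chain-unstable manifolds of $\La$ meet those of this periodic orbit, so $\La$ lies in its chain-recurrent class. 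Type (N) is handled similarly: attracting and repelling neighbourhoods of the zero section, combined with the nonzero exponent, again let one create (by Ergodic Closing + Franks) a periodic orbit of index $\dim E^s+1$ (or $\dim E^s$, depending on the sign) remaining near $\La$, and one concludes as before. Type (P) is a bookkeeping combination of (N) and (H) on the two sides $E^{c,+}_1$ and $E^{c,-}_1$; in each of the subcases $P_{SU},P_{NS},P_{NU}$ at least one side is contracting or expanding, which is what is needed.

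For general $k$ I would reduce to the $k=1$ situation by induction on $k$, peeling off one extremal central bundle at a time. Consider the coarser dominated splitting $T_\La M=E^s\oplus E^c_1\oplus(E^c_2\oplus\cdots\oplus E^c_k\oplus E^u)$ with one-dimensional intermediate bundle $E^c_1$, and apply Theorem~\ref{t.central}. If we are in type (R), (N), (H) or a $P$-subcase that gives a contracting/expanding side for $E^c_1$, the same argument as above produces a periodic orbit near $\La$ whose chain-recurrent class contains $\La$, and we are done. Otherwise $E^c_1$ contributes no obstruction — in effect the behaviour along $E^c_1$ is ``chain-neutral on both sides'' in a way that forces zero exponents there — and we may enlarge the stable bundle by $E^c_1$, i.e. work with $\widetilde E^s=E^s\oplus E^c_1$ and the remaining central bundles $E^c_2,\dots,E^c_k$; but then either $\widetilde E^s$ is not uniformly contracted, in which case Theorem~\ref{t.manehomoclinic}-type reasoning inside the relevant homoclinic class again yields the periodic point, or it is, and we have reduced to $k-1$ central bundles. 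Symmetrically one may instead peel $E^c_k$ into $E^u$. Iterating, either at some stage we produce the desired periodic orbit, or we reach $k=1$ and invoke that case, whose ``zero exponent'' alternative then propagates back to give zero exponents along every $E^c_i$ — but for $k\ge 2$ this last possibility is excluded by the hypothesis that $E^c_1$ is not uniformly contracted and $E^c_k$ is not uniformly expanded, forcing us into the periodic-class alternative.

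The main obstacle, and the step I expect to be delicate, is passing from a \emph{chain}-recurrent or \emph{chain}-stable behaviour of a central plaque (which is all the center model gives) to a genuine uniform estimate on a sub-invariant-set on which one can run the Ergodic Closing Lemma and Franks' Lemma — in other words, converting the topological information of Theorem~\ref{t.central} into the quantitative product estimates of type \eqref{e.periodic}, and then controlling the perturbation so that the resulting periodic orbit both has the correct index and stays Hausdorff-close to $\La$ so that genericity removes the perturbation. Managing the interaction of the two ``sides'' of $E^c$ in type (P), and making the induction on $k$ bottom out cleanly, is where the bookkeeping is heaviest; the true content, though, is Theorem~1.2 of \cite{CSY}, which is exactly the tool that performs this conversion, so in the survey this proof is only a sketch indicating how that theorem is applied.
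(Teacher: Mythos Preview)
Your proposal is much more elaborate than the paper's argument and misses the simple observation that makes the general case immediate. The paper's proof is two lines: if some ergodic measure supported on $\La$ has a nonzero Lyapunov exponent along one of the $E^c_i$, then Theorem~1.2 of \cite{CSY} applies directly and puts $\La$ inside the chain-recurrent class of a periodic point; otherwise every ergodic measure has zero exponent along every $E^c_i$, which is the first alternative. You correctly identify \cite{CSY} as the engine at the very end, but everything before that --- the center-model case analysis for types (N), (H), (P), the Ergodic Closing Lemma plus Franks' Lemma step, and the induction on $k$ by peeling off extremal bundles --- is an attempt to \emph{reprove} that theorem rather than invoke it, and it is both unnecessary and not correct as written.

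Two specific gaps. First, the induction on $k$ is not needed and does not close: the step ``enlarge the stable bundle to $\widetilde E^s=E^s\oplus E^c_1$'' produces a bundle that is not uniformly contracted, so the inductive hypothesis no longer applies, and you are back where you started. The right observation for $k\ge 2$ is domination: for any ergodic $\mu$ the exponents satisfy $\lambda^c_1(\mu)<\lambda^c_2(\mu)<\cdots<\lambda^c_k(\mu)$ (with a uniform gap from \eqref{e.ds}), so they cannot all vanish and the first alternative is automatically excluded. Second, your handling of types (N), (H), (P) in the $k=1$ case is vague at the crucial point: ``promote chain-contraction to genuine uniform contraction on a sub-class carrying $\nu$'' is exactly the content of \cite{CSY}, not something that follows from the center model classification or from Theorems~\ref{t.ergodic}--\ref{t.manehomoclinic} alone. (Also, in your type (R) paragraph the parenthetical about replacing $E^u$ by $E^c_1\oplus E^u$ and using the negative exponent is confused; for $k=1$ the previous theorem applies verbatim with the given $E^s\oplus E^c_1\oplus E^u$.)
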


The idea is the following: if all Lyapunov exponents along $E^c_1$ of every ergodic measure are zero then we are in the first case. Otherwise there exists an ergodic measure $\mu$  whose Lyapunov exponent is non-zero and Theorem 1.2 of \cite{CSY} applies and we are in the second case.

Furthermore, it follows from the same theorem that, in the second case, $\La$ is contained in the local homoclinic class of a hyperbolic periodic orbit $\cO$ in a neighborhood $U$ of $\La$ (that is, the closure of the set of transversal intersections of the stable and unstable manifolds of $\cO$ whose orbits are contained in $U$)  and so with the same structure $E^s\oplus E^c_1\oplus\ldots E^c_k\oplus E^u.$

\begin{prob}\label{prob.aperiodic}
Let $f$ be a $C^r$ generic diffeomorphism. Does there exist an aperiodic chain recurrent class $\cC$ (i.e. a chain recurrent class without periodic points) partially hyperbolic $T_\cC M=E^s\oplus E^c\oplus E^u$ with $dimE^c=1$?
\end{prob}

A negative answer to this question will represent a definite step towards the Palis's conjecture or the Bonatti's conjecture (see \cite{B}, \cite{CP}, \cite{CSY}).

\section{Extremal one dimensional subbundle}\label{s.extremal}

In this section we study compact invariant sets $\La$ having dominated splitting $T_\La M=E\oplus F$ where $E$ or $F$ is one dimensional. We call this kind of splitting \textit{codimension one dominated splitting}.

Lets begin with the simplest case: when the ambient manifold $M$ is compact surface (i.e. a bidimensional compact riemannian manifold). If $\La$ is a compact set having dominated splitting $T_\La M=E\oplus F$ then both $E$ and $F$ are (extremal and) one dimensional. How such a set can fail to be hyperbolic? There are two trivial counterexamples: either $\La$ contains a non hyperbolic periodic point or $\La$ contains a periodic simple closed curve normally hyperbolic (and hence attracting or repelling) such that at the period the dynamics has irrational rotation number. The next result says that these are the only obstructions provided the diffeomorphism is at least of class $C^2.$

\begin{thm}[\cite{PS2}]\label{t.ps}
Let $f\in\Di^2(M)$ where $M$ is bidimensional. Let $\La$ be a compact invariant set having a Dominated Splitting $T_\La M=E\oplus F.$ Assume that any periodic point of $f$ in $\La$ is hyperbolic and that $\La$ does not contain a periodic simple closed curve supporting an irrational rotation. Then $\La$ is hyperbolic.
\end{thm}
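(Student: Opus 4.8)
The plan is to reduce hyperbolicity of $\La$ to a quantitative control of the derivatives along the two one-dimensional bundles $E$ and $F$, using $C^2$ regularity to upgrade ``weak'' contraction/expansion at periodic points to uniform contraction/expansion on all of $\La$. First I would invoke Theorem \ref{t.manehomoclinic}-style reasoning together with Ma\~n\'e's Ergodic Closing Lemma (Theorem \ref{t.ergodic}): if $E$ fails to be uniformly contracted, there is a point $x\in\La$ with $\prod_{j=0}^{n}\|Df_{/E_{f^j(x)}}\|\ge 1/2$ for all $n\ge 0$, hence an ergodic measure $\nu$ with $\int\log\|Df_{/E}\|\,d\nu\ge 0$. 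By the ergodic closing lemma one produces, after an arbitrarily small $C^1$ perturbation, a periodic orbit shadowing a generic point of $\nu$ and lying in a small neighborhood of $\La$ where the dominated splitting persists; along this orbit $E$ is very weakly contracted (or expanded). The symmetric statement holds for $F$.

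The heart of the argument is then a local/global dichotomy carried out separately on $E$ and on $F$, exploiting that these are genuine line fields on a surface. Here I would use the $C^2$ hypothesis via a Denjoy-type or bounded-distortion argument: if $E$ is not uniformly contracted, one obtains a periodic point $q$ whose $E$-eigenvalue has modulus arbitrarily close to $1$; Pliss' Lemma (applied to the $E$-cocycle) then yields, along the $q$-orbit, long stretches of hyperbolic times with uniform contraction rate $\gamma_2<1$, and a complementary long stretch of near-neutral behavior. Along the near-neutral stretch the $C^2$ control lets one integrate the one-dimensional dynamics, producing either a genuine non-hyperbolic periodic point (if the near-neutral stretch closes up) or a normally hyperbolic invariant curve on which the return map is a circle diffeomorphism; by Denjoy's theorem in $C^2$ this circle map either has a periodic orbit — again giving a non-hyperbolic periodic point in $\La$ — or is conjugate to an irrational rotation, i.e. the excluded periodic simple closed curve. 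In either case we contradict one of the two standing hypotheses, so $E$ must be uniformly contracted; by the time-reversed argument $F$ is uniformly expanded, and Definition \ref{dHyp} is satisfied.

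The main obstacle, and the place where $C^2$ is genuinely used, is controlling the geometry of the orbit segments during the near-neutral stretches: one must show that the accumulated ``weakly contracting'' pieces organize into an actual invariant object (a periodic point or an invariant curve) rather than merely a recurrent but structureless piece of $\La$. This requires a distortion estimate of the form $\sum_j \mathrm{dist}(f^j(\cdot))$ summable along the weak stretches — exactly the kind of bound that fails in $C^1$ (cf.\ \cite{BCS}) but holds in $C^2$ — together with a careful use of the dominated splitting to guarantee that the tangent direction to the emerging curve is forced to be $F$ (for the $E$ analysis) so that normal hyperbolicity, and hence the $C^1$ invariant curve of \cite{HPS}, is available. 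Making this uniform over the sequence of longer and longer periodic stretches, and extracting a limiting invariant curve, is the technical core; once that is in place the Denjoy alternative and the hypotheses of the theorem close the argument.
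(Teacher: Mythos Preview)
Your proposal has a genuine gap at its starting point. The Ergodic Closing Lemma (Theorem \ref{t.ergodic}) produces a periodic orbit for a $C^1$-\emph{perturbation} $g$ of $f$, not for $f$ itself; the shadowing periodic point $q$ need not be periodic for $f$, need not lie in $\La$, and the perturbed $g$ need not be $C^2$. The hypotheses of the theorem concern only periodic points of $f$ in $\La$ (and the $C^2$ regularity of $f$), so neither the assumption ``every periodic point in $\La$ is hyperbolic'' nor the Denjoy argument you want to run afterwards can be applied to this $q$. Thus the mechanism ``non-hyperbolicity $\Rightarrow$ weak periodic point $\Rightarrow$ forbidden object'' never gets off the ground: you cannot legitimately place $q$ inside $\La$ as a periodic point of $f$. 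The $C^1$ counterexample built later in the paper (Theorem \ref{t.example}) shows this is not a technicality: there one has a dominated set all of whose periodic points are hyperbolic saddles, with eigenvalues accumulating on $1$ but never reaching it; no non-hyperbolic periodic point of $f$ ever appears.

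The paper's route (which is the one in \cite{PS2}) is entirely non-perturbative and structurally different. One first uses \cite{HPS} to obtain, for every $x\in\La$, locally invariant $C^1$ curves $\cW^{cs}_{loc}(x)$ and $\cW^{cu}_{loc}(x)$ tangent to $E_x$ and $F_x$. The $C^2$ hypothesis is used (via Schwarz/Denjoy-type distortion control on these one-dimensional plaques) to show that they are dynamically of type~(H): $f^n|\cW^{cs}_{loc}(x)$ contracts to $x$ and $f^{-n}|\cW^{cu}_{loc}(x)$ contracts to $x$. It is precisely in establishing this step that the two excluded objects arise as the only possible obstructions --- if a center plaque fails to contract, its iterates accumulate on either a non-hyperbolic periodic orbit in $\La$ or a normally hyperbolic periodic closed curve carrying an irrational rotation. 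Once the plaques are known to be genuine stable/unstable manifolds, one proves $\sum_{n\ge 0}|f^n(\cW^{cs}_{loc}(x))|<\infty$ (again using $C^2$ distortion), and from this one deduces $\|Df^n_{/E_x}\|\to 0$ uniformly, and symmetrically for $F$. No closing lemma and no perturbation enters.
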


This result implies that if all periodic points hyperbolic and no normally hyperbolic curve supporting an irrational rotation exists (which is a $C^r$ generic condition for $r\ge 1$)  then a set having dominated splitting for $f\in Diff^r, r\ge 2$ is hyperbolic. And also implies that there exists a residual set $\cD\subset \Di^1(M)$ such that if $f\in \cD$ and $\La$ is a compact invariant set having dominated splitting is hyperbolic. To prove this consider $\{U_n\}$ a countable basis of the topology of $M.$ Consider the family $\{\cV_n\}$ of finite collection of elements $U_n$'s (which is countable). Let $\cA_n$ the interior of the set $$\{f\in \Di^1(M): \mbox{ if } K\subset \cV_n \mbox{ is compact, invariant having DS is hyperbolic}\}$$
where DS stands for Dominated Splitting. Let $\cB_n$ be the complement of the closure of $\cA_n.$ Let $\cD_n=\cA_n\cup\cB_n$ and $\cD=\cap_n\cD_n.$ Let $f\in\cD$ and let $K$ a compact invariant set having Dominated Splitting. Let $V$ be a compact neighborhood of $K$ and let $\cU$ be a neighborhood of $f$ such that for any $g\in\cU$ the maximal invariant set of $g$ in $V$ has Dominated Splitting. Now, there exists $\cV_n$ such that $K\subset\cV_n\subset V.$ Taking a sequence $g_n$ of $C^2$ generic diffeomorphism converging to $f$ we see that the maximal invariant set of $g_n$ in $V$ is hyperbolic and so $f$ can not be in $\cB_n.$ Therefore, $f\in\cA_n$ and so $K$ is hyperbolic.

So far there is no proof of this fact just using $C^1$ techniques (i.e. without approximating by a $C^2$ diffeomorphism and using  Theorem \ref{t.ps}).

The proof of Theorem \ref{t.ps} is very technical. It is an extension of a result on non-critical one dimensional dynamics of Ma\~n\'e (\cite{M7}). A very rough and general idea about the proof is: since both $E$ and $F$ are one dimensional extremal bundles, there exist locally invariant manifolds $\cW^{cs}_{loc}(x)$ and $\cW^{cu}_{loc}(x)$ tangent to $E$ and $F$ respectively having dynamics properties: if $y\in \cW^{cs}_{loc}(x)$ then $dist(f^n(y),f^n(x))\to_{n\to\infty} 0$ and similarly for $\cW^{cu}_{loc}(x)$ in the past (in terms of Crovisier's Center Models, they are respectively type(H)-attractive and type(H)-repelling). Then one prove that $\sum_{n\ge 0}|f^n(\cW^{cs}_{loc}(x)|<\infty$ where $|f^n(\cW^{cs}_{loc}(x)|$ denotes the length (the same for $\cW^{cu}_{loc}(x)$ in the past). Finally, the above implies that $\|Df^n_{/E_x}\|\to_{n\to\infty} 0$ and $\|Df^{-n}_{/F_x}\|\to_{n\to\infty} 0.$ For all these facts, the $C^2$ assumption is crucial (in order to control distortion). Indeed, if $f$ is just $C^1$ the above theorem is false: at the end of this section we give a counterexample.

Theorem \ref{t.ps} gives that dominated splitting on two dimensions of a $C^2$ diffeomorphism imposes  certain constraints. Thus, we may ask if we can fully describe the dynamics. The answer is \textit{yes} (at least to some extent).

\begin{thm}[\cite{PS3}]\label{t.dd}
Let $f\in
Diff^2(M^2)$ and assume that the Limit Set $L(f)$ has a dominated splitting. Then
$L(f)$ can be decomposed into
$L(f)=\mathcal{I}\cup{\Li(f)}\cup{\mathcal{R}}$ such that
\begin{enumerate}
\item $\mathcal{I}$ is a set of periodic points with bounded periods
and contained in a disjoint union of finitely many normally
hyperbolic periodic
 arcs or simple closed curves.
\item $\mathcal{R}$ is a finite union of normally hyperbolic periodic simple closed curves
supporting an irrational rotation.
\item $\Li(f)$ can be decomposed into a disjoint union of finitely
many compact invariant and transitive sets. The periodic points are
dense in $\Li(f)$ and contains at most finitely many non-hyperbolic
periodic points. The (basic) sets above are the union of finitely
many (nontrivial) homoclinic classes. Furthermore $f{/\Li(f)}$ is
expansive.
\end{enumerate}
\end{thm}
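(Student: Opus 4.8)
The plan is to peel off the three predicted pieces in order: first the non-hyperbolic periodic points (these will make up $\mathcal{I}$), then the normally hyperbolic periodic simple closed curves carrying an irrational rotation (these will make up $\mathcal{R}$), and finally to show that what remains is uniformly hyperbolic away from finitely many periodic orbits, so that a Smale spectral–type decomposition produces $\Li(f)$. The observation to keep in hand throughout is that domination forces \emph{every} periodic point of $f$ in $L(f)$ to be quasi-hyperbolic: at $p$ with period $n(p)$ the eigenvalues $\la_E,\la_F$ of $Df^{n(p)}_p$ along $E$ and $F$ are real (the bundles are one-dimensional), and \eqref{e.ds} gives $|\la_E|/|\la_F|\le\la^{n(p)}$. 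Hence a non-hyperbolic $p$ has exactly one neutral eigenvalue ($=\pm1$) while the remaining direction is uniformly exponentially contracting (when $|\la_F|=1$) or uniformly exponentially expanding (when $|\la_E|=1$). Such a $p$ therefore carries a uniform strong stable (resp.\ strong unstable) manifold tangent to the hyperbolic bundle and a $C^1$ locally invariant center curve $\cW^{c}(p)$ tangent to the neutral bundle, on which $f^{n(p)}$ has a neutral fixed point (cf.\ \cite{HPS}).

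The first and hardest step is to control these neutral periodic points. Here I would run the $C^2$ distortion and length estimates underlying Theorem~\ref{t.ps} (the surface analogue of Ma\~n\'e's non-critical one-dimensional theory, \cite{M7}): restricting $f^{n(p)}$ to $\cW^{c}(p)$ one obtains a $C^2$ interval map with a neutral fixed point, and for such a map the fixed point is either one-sidedly attracting/repelling (saddle–node type) or its basin accumulates onto a periodic simple closed curve supporting an irrational rotation. Fed back into the global dynamics on $L(f)$, this should yield: (a) the periods $n(p)$ of all non-hyperbolic periodic points in $L(f)$ are bounded by a uniform $N$; (b) these orbits, together with the local center manifolds on which they sit as centers of normally attracting or normally repelling invariant sets, organize into \emph{finitely many} normally hyperbolic periodic arcs or simple closed curves — normal hyperbolicity giving each such piece a definite ``size'', which is what bounds their number. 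The arcs and the trivial/rational-rotation closed curves, together with their finitely many periodic points, make up $\mathcal{I}$, and the irrational-rotation closed curves make up $\mathcal{R}$. I expect this to be the principal obstacle, and it is exactly where the $C^2$ hypothesis is indispensable — the theorem genuinely fails in class $C^1$.

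With $\mathcal{I}$ and $\mathcal{R}$ identified, set $\Li(f):=\overline{L(f)\setminus(\mathcal{I}\cup\mathcal{R})}$ and, for small $\delta>0$, let $K_\delta$ be the maximal $f$-invariant subset of $L(f)$ lying outside the $\delta$-neighbourhood of the (finitely many) non-hyperbolic periodic orbits and irrational-rotation curves. Each $K_\delta$ is compact and invariant, inherits the dominated splitting $E\oplus F$, has all of its periodic points hyperbolic, and by construction contains no periodic simple closed curve supporting an irrational rotation; hence Theorem~\ref{t.ps} applies and $K_\delta$ is hyperbolic, with hyperbolicity constants one checks (again via the estimates behind that theorem) to be uniform in $\delta$. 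Letting $\delta\to0$ then shows that $\Li(f)$ is uniformly hyperbolic away from the finitely many quasi-hyperbolic periodic orbits it may still contain — i.e.\ ``hyperbolic off finitely many periodic orbits''. A Smale spectral decomposition (\cite{S}) carried out on the genuinely hyperbolic part, together with the observation that the finitely many remaining periodic orbits can only merge finitely many of the basic pieces, exhibits $\Li(f)$ as a finite disjoint union of compact invariant transitive sets in which periodic points are dense and all but finitely many are hyperbolic, each piece being a finite union of nontrivial homoclinic classes; the Anosov Closing Lemma (Theorem~\ref{t.closing}) and Pliss' Lemma provide the orbit-closing and the homoclinic intersections that realize transitivity.

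Finally, expansivity of $f_{/\Li(f)}$. On each basic piece the uniformly hyperbolic core is expansive for the usual reason: a pair of orbits staying within a small $e$ of one another for all time forces a point onto both a local center-stable and a local center-unstable manifold of the other, and these intersect transversally — the angle between $E$ and $F$ being bounded away from zero by Proposition~\ref{p.dominated} — hence in the single common point. One then checks by hand that adjoining the finitely many saddle–node type periodic orbits creates no new pair of distinct orbits that stay $e$-close for all $n\in\ZZ$, since on the attracting side of such an orbit distinct orbits separate in backward time and on the hyperbolic side they separate too; taking $e$ smaller than the gaps between the finitely many disjoint pieces gives expansivity of $f_{/\Li(f)}$. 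A secondary difficulty worth flagging is the \emph{finiteness} of the number of homoclinic classes composing $\Li(f)$: this is not automatic from hyperbolicity, since the set need not be locally maximal a priori, and its proof again leans on the uniform $C^2$ estimates and on working with the whole limit set rather than with an arbitrary invariant set.
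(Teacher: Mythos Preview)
The paper does not actually prove this theorem; it is quoted from \cite{PS3}, and the only hint the survey offers is the theorem stated immediately afterwards: for a $C^2$ surface diffeomorphism, any periodic point in a dominated set whose period exceeds some $N_1$ is a hyperbolic \emph{saddle}. You correctly single this out as the ``first and hardest step'' and as the place where the $C^2$ hypothesis is indispensable, so your overall architecture --- isolate the non-saddle periodic orbits and the irrational-rotation curves, then feed the remainder into Theorem~\ref{t.ps} and run a spectral decomposition --- is in line with what the paper indicates.

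Two points deserve comment. First, your description of $\mathcal{I}$ as consisting of the \emph{non-hyperbolic} periodic points is too narrow. The bounded-period theorem rules out not only neutral periodic points of large period but also hyperbolic \emph{sinks and sources} of large period: that is the content of the phrase ``of saddle type''. A periodic sink is a perfectly legitimate point of $L(f)$ with dominated splitting, and it must land in $\mathcal{I}$, sitting on a normally attracting arc or curve. Your center-manifold discussion treats only the neutral case and says nothing about why attracting or repelling periodic orbits cannot have arbitrarily high period.

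Second, the mechanism you sketch for the period bound --- restrict $f^{n(p)}$ to $\cW^c(p)$ and analyze the resulting one-dimensional map with a neutral fixed point --- describes the local picture near a single such orbit but does not by itself produce a \emph{uniform} $N_1$ valid for all of them at once. That uniformity is precisely what the survey flags as ``rather surprising''; it is the substantive content of the \cite{PS3} argument and does not follow from local normal-form considerations alone. Your sketch is honest about this being the principal obstacle, but the reader should not come away thinking the center-manifold picture is the proof.
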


A fundamental step to prove the theorem above is the following rather surprising result:

\begin{thm}[\cite{PS3}]
Let $f:M\to M$ be a $C^2$-diffeomorphism
of a two dimensional compact riemannian manifold $M$ and let $\La$
be a compact invariant set having dominated splitting. Then, there
exists an integer $N_1>0$ such that any periodic point $p\in\La$
whose period is greater than $N_1,$ is a hyperbolic periodic point
of saddle type.
\end{thm}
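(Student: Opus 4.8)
The plan is to argue by contradiction: suppose that for every $N$ there is a periodic point $p_N\in\La$ with period $n(p_N)>N$ that is either non-hyperbolic or hyperbolic of index $0$ or $2$ (a source or a sink). Since $\La$ has a dominated splitting $T_\La M=E\oplus F$, the eigenvalue directions of $Dp_N f^{n(p_N)}$ must be compatible with the splitting; in particular $E$ and $F$ each carry one eigenvalue of $Df^{n(p_N)}_p$ (counting the action along the center-stable/center-unstable plaques). The domination \eqref{e.ds} forces, for every periodic point $p$ of period $n$,
\begin{equation}\label{e.prodbound}
\|Df^n_{/E_p}\|\,\|Df^{-n}_{/F_{p}}\|\le C\la^n,
\end{equation}
so the ``$F$-eigenvalue'' always strictly dominates the ``$E$-eigenvalue'' once $n$ is large (say $n>N_0$, where $C\la^{N_0}<1/2$). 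Hence for $n(p_N)>N_0$ the point cannot be a genuine source or sink with $E$ and $F$ both contracting or both expanding in a balanced way: if $p_N$ were a sink, then $\|Df^{n}_{/F_{p_N}}\|<1$, and if a source then $\|Df^{-n}_{/E_{p_N}}\|<1$; either way \eqref{e.prodbound} pins the weak subbundle to be very close to neutral. The remaining possibilities to rule out are: $p_N$ is non-hyperbolic with an eigenvalue on the unit circle along $E$ or along $F$, or $p_N$ is a sink with the $E$-eigenvalue extremely small and the $F$-eigenvalue just below $1$ (symmetrically a source).

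First I would use the local invariant manifolds: because $E$ and $F$ are the extremal one-dimensional bundles of a codimension-one dominated splitting, by the graph transform / $\cW^c$-plaque theory (as in \cite{HPS}, and exactly the setup used for Theorem \ref{t.ps}) every $x$ whose orbit stays near $\La$ has locally invariant curves $\cW^{cs}_{loc}(x)$ tangent to $E$ and $\cW^{cu}_{loc}(x)$ tangent to $F$. The key estimate, which is the heart of the matter and where the $C^2$ hypothesis enters decisively, is a bounded-distortion / summability statement along these one-dimensional plaques: one shows, using the $C^2$ control of the nonlinearity (Denjoy–Schwarz type arguments, the same ones Ma\~n\'e used in \cite{M7} and Pujals–Sambarino used for Theorem \ref{t.ps}), that for a periodic point $p$ of period $n$ the plaque $\cW^{cs}_{loc}(p)$ cannot shrink too slowly: there is a uniform bound on $\sum_{j\ge0}|f^j(\cW^{cs}_{loc}(p))|$ independent of $p$, and dually for $\cW^{cu}_{loc}$ in backward time. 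Combined with the fact that $\cW^{cs}_{loc}(p)$ and $\cW^{cu}_{loc}(p)$ of a periodic point must be \emph{periodic} curves, this forces $\|Df^n_{/E_p}\|\to0$ and $\|Df^{-n}_{/F_p}\|\to0$ \emph{uniformly} as $n\to\infty$: precisely, there is $N_1$ such that $n(p)>N_1$ implies $\|Df^{n(p)}_{/E_p}\|<1/2$ and $\|Df^{-n(p)}_{/F_p}\|<1/2$, i.e. $p$ is a hyperbolic saddle of index $\dim E$.

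The main obstacle is exactly this uniform distortion/summability bound along the center plaques of long periodic orbits. The subtlety is twofold: (i) the plaques are only \emph{locally} invariant, so one must propagate the estimate along the orbit while controlling how plaques are pieced together, and (ii) the bound must be uniform over \emph{all} periodic points in $\La$ simultaneously, not just for a fixed one — this is what lets us extract a single threshold $N_1$. To handle (i) I would invoke that a periodic point yields a genuinely invariant periodic curve (the union of the plaques along the orbit closes up), reducing to one-dimensional dynamics of $f^{n(p)}$ restricted to that curve and its normal behavior; to handle (ii) I would run a compactness argument: if no uniform $N_1$ existed, take $p_k$ with $n(p_k)\to\infty$ violating the conclusion, pass to a limit measure / limit invariant curve in $\La$, and contradict either the summability estimate or the absence in $\La$ of a normally hyperbolic curve with irrational rotation (which, by Theorem \ref{t.ps}, is the only way a dominated-splitting set on a surface can fail to be hyperbolic). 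Once the uniform threshold $N_1$ is obtained, the statement follows immediately, since a periodic point of period $>N_1$ in $\La$ then has $E$ strictly contracting and $F$ strictly expanding under $Df^{n(p)}$, hence is a hyperbolic saddle.
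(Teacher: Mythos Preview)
The survey does not contain a proof of this theorem; it is quoted from \cite{PS3} as a ``rather surprising'' input to the decomposition Theorem~\ref{t.dd}, so there is no proof in the paper to compare against. Your outline correctly identifies the main ingredients used in \cite{PS3} (the domination inequality to separate the two eigenvalues, the locally invariant $\cW^{cs}/\cW^{cu}$ plaques from \cite{HPS}, and the $C^2$ Denjoy--Schwarz distortion control along one-dimensional leaves inherited from \cite{M7} and \cite{PS2}).

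There is, however, a genuine gap in your closing compactness argument. You propose to take a sequence $p_k$ with $n(p_k)\to\infty$ violating the conclusion, pass to a limit set $K\subset\La$, and then invoke Theorem~\ref{t.ps} to reach a contradiction. But the statement you are trying to prove makes \emph{no} assumption that all periodic points in $\La$ are hyperbolic, nor that $\La$ avoids normally hyperbolic circles with irrational rotation --- so you cannot ``contradict'' either of these. Concretely, nothing in your argument rules out that the orbits of the $p_k$ accumulate on a non-hyperbolic periodic point $q\in\La$ of small period (say with $F$-eigenvalue equal to $1$): then $K$ fails the hypotheses of Theorem~\ref{t.ps}, the summability of plaque lengths near $q$ is exactly what is in question, and the limit measure on $K$ having nonpositive exponent along $F$ is perfectly consistent with $q$ sitting there. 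The whole content of the theorem is precisely that such accumulation cannot produce long-period non-saddles, and this requires a finer dichotomy for the plaque dynamics (of the type developed in \cite{PS3}: either the $\cW^{cs}$/$\cW^{cu}$ plaques are genuinely stable/unstable, or the orbit is trapped in a normally hyperbolic periodic arc or curve --- and the latter forces bounded period) rather than a black-box appeal to Theorem~\ref{t.ps}.
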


Now, let turn to $dim M\ge 3$ and let $\La$ a compact invariant set having codimension one dominated splitting, say $T_\La M=E\oplus F$ where $F$ is one-dimensional. A natural extension of Theorem \ref{t.ps}  should be: similar conditions as in Theorem \ref{t.ps} imply that $F$ is uniformly expanding?. Some partial results were given in \cite{PS4} (when $E$ is uniformly contracted) and in \cite{CP} with some condition on the topology of $\La.$ Nevertheless, it turns out to be true:

\begin{thm}[\cite{CPS}]
Let $f:M\to M$ be a $C^2$ diffeomorphism and let $\La$ be a compact invariant set having a dominated splitting $T_\La M=E\oplus F$ with $dim F=1.$ Assume that for any periodic points in $\La$ the Lyapunov exponent  $F$ is positive and that $\La$ does not contain a periodic simple closed curve tangent to $F$ and supporting an irrational rotation. Then $F$ is uniformly expanded.
\end{thm}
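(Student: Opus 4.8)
The plan is to follow the strategy behind Theorem \ref{t.ps}, but in the present higher-dimensional setting where only the one-dimensional bundle $F$ is extremal. Because $F$ is one-dimensional and extremal, the theory of \cite{HPS} provides for every $x\in\La$ a $C^1$ locally invariant center-unstable curve $\cW^{cu}_{loc}(x)$ tangent to $F_x$; these curves play the role of the unstable manifolds in the classical picture. The first step is to establish a dichotomy on each such curve: either the forward length shrinks, i.e. $|f^{-n}(\cW^{cu}_{loc}(x))|\to 0$ and in fact $\|Df^{-n}_{/F_x}\|\to 0$ exponentially (which is what we want), or there is some ``recurrence of length'' along $F$. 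I would rule out the bad alternative using the hypotheses: the assumption on periodic points forces, via Pliss' Lemma and the Ergodic Closing Lemma (Theorem \ref{t.ergodic}), that $\int\log\|Df^{-1}_{/F}\|\,d\mu<0$ for every $f$-invariant measure supported on $\La$ (otherwise one produces, by perturbation, a periodic point with non-positive Lyapunov exponent along $F$, or — more carefully, staying inside the $C^2$ category — one uses the closing lemma of Theorem \ref{t.closing} to produce an actual periodic point of $f$ in $\La$ violating the hypothesis). A uniform version of this, again through Pliss, gives that from every $x$ one can extract ``$\eta$-times'' $n_k\to\infty$ along which $\prod_{j=0}^{n-1}\|Df^{-1}_{/F_{f^{-j}(f^{n_k}(x))}}\|\le\eta^{n}$.

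The second, and technically heaviest, step is the length estimate $\sum_{n\ge 0}|f^{-n}(\cW^{cu}_{loc}(x))|<\infty$, uniformly over $x\in\La$. Here the $C^2$ hypothesis is essential: one controls the distortion of $f^{-n}$ restricted to the (one-dimensional) curves $\cW^{cu}_{loc}$ by a bounded-distortion argument, exactly as in the one-dimensional non-critical theory of Ma\~n\'e \cite{M7} and its two-dimensional extension \cite{PS2}. The Pliss times from Step~1 provide the ``hyperbolic times'' at which distortion is controlled on arbitrarily long backward segments; summing the geometric series over consecutive Pliss times and bounding the in-between pieces (using that the number of iterates between consecutive Pliss times cannot be too large, again by the Pliss density estimate) yields the summability. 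This is where I expect the main obstacle: in dimension $\ge 3$ the center-unstable curves are only $C^1$ and are merely \emph{locally} invariant, so one must be careful that the distortion control — which is naturally an estimate along genuine invariant one-dimensional objects — survives the passage to plaques, and that the tubular neighborhoods in which the plaques are defined are respected by all the backward iterates involved. One also has to handle the possibility of a periodic simple closed curve tangent to $F$ with irrational rotation number as the sole exceptional recurrent piece, and check it is excluded by hypothesis; any other recurrent behavior of the plaques must be shown to contradict the summability or the measure estimate of Step~1.

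Finally, from $\sum_{n\ge0}|f^{-n}(\cW^{cu}_{loc}(x))|<\infty$ uniformly, one concludes $\|Df^{-n}_{/F_x}\|\to 0$ uniformly on $\La$ (the length of the image of a fixed small arc dominates $\|Df^{-n}_{/F_x}\|$ up to the distortion bound), and a standard argument upgrades uniform convergence to zero into uniform exponential contraction of $Df^{-1}$ along $F$ — that is, $F$ is uniformly expanded. The argument is entirely parallel to the deduction of uniform hyperbolicity at the end of the proof sketch of Theorem \ref{t.ps}, with the role of $\cW^{cs}$ now played only by the (not necessarily hyperbolic) complementary bundle $E$, which we never need to contract: the statement only claims expansion of $F$.
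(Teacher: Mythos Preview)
The paper does not contain a proof of this theorem. It is stated with a bare citation to \cite{CPS}, a work listed in the bibliography as \emph{in preparation}, and no argument (not even a sketch) is given in the text. Consequently there is nothing in the paper to compare your proposal against.

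That said, a couple of comments on the proposal itself. Your overall strategy --- carry the \cite{PS2} scheme to the one-dimensional extremal bundle $F$ via center-unstable plaques, obtain Pliss times along $F$, control distortion using $C^2$, deduce summability of backward lengths, and upgrade to uniform expansion --- is the natural one and matches the outline the paper gives for Theorem~\ref{t.ps}. But your Step~1 has a slip: you invoke the Ergodic Closing Lemma (Theorem~\ref{t.ergodic}) to rule out an ergodic measure with nonnegative $F$-exponent. That lemma produces a periodic point for a \emph{perturbation} $g$ of $f$, not for $f$ itself, so it cannot contradict the hypothesis ``every periodic point of $f$ in $\La$ has positive $F$-exponent''. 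You seem aware of this and point to Theorem~\ref{t.closing} instead, but note that the $\eta$-string hypothesis there requires simultaneous control on \emph{both} $E$ and $F$, and you have no contraction information on $E$. The actual route (as in \cite{PS2}, \cite{PS4}) avoids closing lemmas at this stage and works directly with the geometry of the $\cW^{cu}$ plaques: one shows, via Denjoy--Schwartz type arguments and the $C^2$ distortion control, that failure of contraction of lengths forces the backward orbit of a plaque to accumulate on either a periodic point with nonpositive $F$-exponent or a normally attracting periodic closed curve tangent to $F$ with irrational rotation --- both excluded by hypothesis. Your Step~2 correctly flags the genuine new difficulty in dimension $\ge 3$: the plaques are only locally invariant and their regularity and coherence are more delicate than in the surface case.
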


We finish this section giving a $C^1$ counterexample for Theorem \ref{t.ps}:

\begin{thm}\label{t.example}
There exists a $C^1$ diffeomorphism $g:\TT^2\to\TT^2$ having a Dominated Splitting $T\TT^2=E\oplus E^u$  such that $E^u$ is uniformly expanded and:
\begin{itemize}
\item Any periodic point of $g$ is hyperbolic of saddle type
\item $g$ is in the $C^1$ closure of the set of Anosov diffeomorphisms.
\item $g$ is conjugated to an Anosov diffeomorphism.
\item $E$ is not uniformly contracted.
\end{itemize}
\end{thm}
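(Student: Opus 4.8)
The plan is to build $g$ by gluing a carefully designed local model into a linear Anosov automorphism, so that a single invariant Cantor set carries periodic saddles whose contraction along $E$ degenerates, while everything else — including the whole expanding direction — stays uniformly hyperbolic.

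First I would start with a linear hyperbolic automorphism $A:\TT^2\to\TT^2$ with eigenvalues $0<\mu<1<\lambda=\mu^{-1}$ and splitting $T\TT^2=E^s_0\oplus E^u_0$, fix a small box $V$ with sides tangent to $E^s_0,E^u_0$, and set $g=A$ off a neighbourhood of $V$. Inside, I would define $g$ so that its locally maximal invariant set $\Lambda_V=\bigcap_n g^{n}(\overline V)$ realizes a ``horseshoe with degenerating stable contraction'': the unstable cone is still expanded by a factor $\ge\lambda$, the stable cone is still contracted, but the stable contraction rate is made to depend on the depth of the Markov nesting, tending to $1$ as the depth grows. One thus produces periodic orbits $q_n\subset\Lambda_V$ of period $n(q_n)\to\infty$, reaching depth $\to\infty$, whose stable Lyapunov exponents $\lambda^s(q_n)=\tfrac1{n(q_n)}\sum_{j}\log\|Dg_{/E_{g^j(q_n)}}\|$ converge to $0^-$; because $g|_V$ is genuinely only $C^1$ (the contraction rates vary with no H\"older control), the ``infinite depth'' locus $Z\subset\Lambda_V$ is a thin invariant set carrying no periodic orbit on which $\|Dg_{/E}\|\equiv 1$, while $\|Dg_{/E_x}\|<1$ for every $x\notin Z$. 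All of this is a local construction and can be inserted so that $\|g-A\|_{C^0}$ is small and every periodic point off $\overline V$ is an unchanged saddle of $A$.

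Next I would verify the stated properties. The unstable cone field is $g$-invariant and uniformly expanded, so by Proposition \ref{p.cones} $g$ has a dominated splitting $T\TT^2=E\oplus E^u$ with $E^u$ uniformly expanded, and by Proposition \ref{p.dominated} $E$ is a continuous line field. Each periodic point of $g$ is either a saddle of $A$ off $\overline V$ or sits in $\Lambda_V$ at finite depth, hence has $\lambda^s<0$ and unstable rate $\ge\lambda>1$: all are hyperbolic of saddle type. The bundle $E$ is not uniformly contracted, for if $\|Dg^n_{/E_x}\|\le C\lambda_0^n$ held with $\lambda_0<1$, evaluating at $x=q_n$ over one period would force $\lambda^s(q_n)\le \tfrac{\log C}{n(q_n)}+\log\lambda_0\to\log\lambda_0<0$, against $\lambda^s(q_n)\to0$. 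And $g$ lies in the $C^1$-closure of the Anosov diffeomorphisms: clipping the stable rate — wherever $\|Dg_{/E}\|>1-\tfrac1N$ replace it smoothly by $1-\tfrac1N$ — gives $g_N$ that contracts $E$ uniformly at rate $\le 1-\tfrac1N$ while still expanding the unstable cone, hence Anosov, and $g_N\to g$ in $C^1$ because the two differ only on the deep part of $V$, where their derivatives differ by $O(1/N)$. (This also explains why the example must be merely $C^1$: a $C^2$ diffeomorphism with a dominated splitting, hyperbolic periodic points and no normally hyperbolic irrational-rotation circle — the latter absent here since $g$ is conjugate to $A$ — would be Anosov by Theorem \ref{t.ps}.)

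The remaining, and hardest, point is that $g$ is topologically conjugate to $A$ rather than just semiconjugate. Being $C^0$-close to the linear Anosov $A$, $g$ admits by the standard shadowing argument a continuous surjection $h:\TT^2\to\TT^2$, homotopic and $C^0$-close to the identity, with $h\circ g=A\circ h$; one must show $h$ is injective. If $h(x)=h(y)$ with $x\ne y$, the $g$-orbits of $x,y$ stay uniformly close for all time; using the uniformly expanding foliation tangent to $E^u$ together with the domination, forward-closeness forces $y$ onto the integral curve of $E$ through $x$, and then, since $\|Dg_{/E}\|<1$ strictly off $Z$, backward iteration strictly expands the $E$-leaf distance at every step whose basepoint avoids $Z$, so boundedness of the backward distance forces $x=y$; the only delicate case, orbits whose past accumulates on $Z$, is handled by keeping $Z$ a thin minimal set disjoint from the periodic set. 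Controlling the dynamics along the non-uniformly contracted bundle $E$ well enough to run this injectivity argument is where the real work lies; once $h$ is a homeomorphism, $g$ is conjugate to $A$, hence to an Anosov diffeomorphism, and the proof is complete.
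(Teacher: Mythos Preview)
Your approach is genuinely different from the paper's, and the difference matters for the points you leave unfinished.

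The paper does \emph{not} perform a single surgery. It builds $g$ as a $C^1$-limit of a Cauchy sequence $f_0,f_1,f_2,\ldots$ of Anosov diffeomorphisms: $f_0$ is the linear automorphism, and $f_{n+1}$ is obtained from $f_n$ by a Franks-type local perturbation (Lemma~\ref{l.pert}) supported near a carefully chosen periodic orbit $p_n$ of $f_n$, weakening its stable exponent from $-\tilde\lambda_n$ to $-\lambda_{n+1}$ with $\lambda_n\to 0$. The sizes $\gamma_n$ of these perturbations are summable, so $f_n\to g$ in $C^1$. Crucially, the linear stable foliation is preserved at every step, and a uniform unstable cone field (Lemma~\ref{l.dom}) persists throughout. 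With this scheme the four properties fall out almost for free: (i) $g$ is in the $C^1$-closure of Anosov by construction; (ii) any given periodic orbit of $g$ has period $k$, and for $n\ge k$ the support of the $n$-th perturbation is chosen disjoint from it, so $g=f_k$ along that orbit and it inherits hyperbolicity from the Anosov $f_k$; (iii) the $p_n$ witness non-uniform contraction of $E$; (iv) expansivity (hence conjugacy to a linear Anosov via \cite{Le}) is read off from the lift to $\RR^2$ using the preserved linear stable foliation and the uniform expansion on the unstable cone.

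Against this, your proposal has real gaps. The ``clipping'' step --- replace $\|Dg_{/E}\|$ by $1-\tfrac1N$ wherever it exceeds that --- describes a modification of the \emph{derivative}, not of the map; turning it into an honest diffeomorphism $g_N$ with $g_N\to g$ in $C^1$ is exactly the content of a lemma like Lemma~\ref{l.pert}, and without the foliated normal form $(x,y)\mapsto(u(x,y),v(y))$ you have not set this up. Your conjugacy argument is, as you acknowledge, incomplete: the injectivity step hinges on backward expansion along $E$ off a set $Z$, but orbits can accumulate on $Z$ and ``thin minimal set'' is not a proof. The paper avoids this entirely by keeping the stable foliation linear. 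Finally, your dichotomy ``periodic points are either saddles of $A$ outside $\overline V$ or lie in $\Lambda_V$'' is not quite right: a $g$-periodic orbit can pass through $V$ without being contained in $\Lambda_V=\bigcap_n g^n(\overline V)$, so its hyperbolicity needs a separate check. None of these obstacles is insurmountable, but the iterative construction in the paper sidesteps all of them at once.
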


For the (sketch of the) proof of this theorem we need some auxiliary lemmas. The first one is straightforward.

\begin{lem}\label{l.dom} Given, $\sigma>1$ there exist $\eta>0, \beta>0$ and $\sigma_1>1$ such that
given  $h$  a local diffeomorphism around $0$ in $\RR^2, h(0)=0$ and assume that $h$ can be written as $h(x,y)=(u(x,y),v(y))$ and such that $\|u_x\|\le e^\beta, \|u_y\|<\eta$ and $\|v_y\|>\sigma$ then:
\begin{itemize}
\item Denoting $\cC_a^u=\{(u,v)\in\RR^2: \|u\|\le a\|v\|\}$ we have $Dh_{(x,y)}\cC^u_1\subset \cC^u_\rho$ where $\rho=\frac{e^\beta+\eta}{\sigma}<1.$
\item If $w\in\cC_1^u$ then $\|Dh_{(x,y)}w\|>\sigma_1.$
\end{itemize}

\end{lem}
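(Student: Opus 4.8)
The plan is a single direct computation with the explicit form of the differential; essentially the only work is choosing the constants consistently. Since $h(x,y)=(u(x,y),v(y))$, at each point
$$Dh_{(x,y)}=\begin{pmatrix} u_x & u_y \\ 0 & v_y\end{pmatrix},$$
where $u_x,u_y,v_y$ are the scalar partials evaluated at $(x,y)$. I would first fix $\beta>0$ with $e^\beta<(\sigma+1)/2$ and $\eta>0$ with $\eta<(\sigma-1)/2$, so that $e^\beta+\eta<\sigma$ and hence $\rho:=(e^\beta+\eta)/\sigma<1$; these are precisely the bounds the hypothesis puts on $h$, so $\rho$ is precisely the contraction factor appearing in the statement.

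For the first bullet, take a nonzero $w=(a,b)\in\cC_1^u$, which by definition means $|a|\le|b|$, so $b\neq0$. Then $Dh_{(x,y)}w=(u_xa+u_yb,\ v_yb)$, and I would estimate the two coordinates separately,
$$|u_xa+u_yb|\le|u_x|\,|a|+|u_y|\,|b|\le e^\beta|b|+\eta|b|=(e^\beta+\eta)|b|,\qquad |v_yb|=|v_y|\,|b|>\sigma|b|.$$
Dividing, the modulus of the first coordinate of $Dh_{(x,y)}w$ is at most $\rho=(e^\beta+\eta)/\sigma$ times the modulus of its (nonzero) second coordinate, i.e.\ $Dh_{(x,y)}w\in\cC_\rho^u$. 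Since $\rho<1$ we also get $\cC_\rho^u\subset\cC_1^u$, so the cone field is in fact carried strictly inside itself, which is the form in which this plugs into Proposition \ref{p.cones}.

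For the second bullet, still with $w=(a,b)\in\cC_1^u$, the same estimate already gives $\|Dh_{(x,y)}w\|\ge|v_yb|>\sigma|b|$, while $|a|\le|b|$ forces $\|w\|=|b|$ in the sup-norm adapted to the coordinate splitting $\RR^2=\RR_x\times\RR_y$ (and $\|w\|\le\sqrt2\,|b|$ in the Euclidean one); hence $\|Dh_{(x,y)}w\|>\sigma_1\|w\|$ with $\sigma_1=\sigma$ for the adapted norm, or $\sigma_1=\sigma/\sqrt2$ for the Euclidean norm (which one can still push above $1$ by working with a slightly narrower cone when $\sigma\le\sqrt2$). I do not expect any genuine obstacle here: the whole content is the elementary triangular-matrix bound above together with a single coherent choice of $\beta$ and $\eta$ making $\rho<1$ and $\sigma_1>1$ simultaneously — which is exactly why the lemma is advertised as straightforward.
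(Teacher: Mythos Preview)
Your proposal is correct and is exactly the direct triangular-matrix computation the paper has in mind; the paper itself gives no proof and simply declares the lemma ``straightforward.'' One small remark: your Euclidean-norm aside is a slight distraction, since for a general diffeomorphism as in the hypothesis $u_x$ is only bounded above (not below), so in the Euclidean norm the best uniform bound on $\cC_1^u$ really is $\sigma/\sqrt2$ and you cannot rescue it by narrowing the cone without changing the statement --- just commit to the sup-norm adapted to the splitting (as you do first), where $\sigma_1=\sigma$ works cleanly and is all that is needed for the application.
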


The next lemma will be the key to the induction argument we are going to do:

\begin{lem}\label{l.pert}
Let $h(x,y)=(u(x,y),v(y))$ be as the above Lemma. Let $\delta>0, \alpha>0, \gamma>0, \beta>0$ and $\la>-\beta$ be given with $e^\beta<2$ and assume that $0<u_x\le e^{-\la}$ then there exists a local diffeomorphism $g$ which coincides with $h$ outside a ball of radius $\delta$ at the origin and such that $g$ can be written as $g(x,y)=(\tilde{u}(x,y),v(y))$ and such that:
\begin{itemize}
\item $\|\tilde{u}_x\|\le e^{\gamma-\la}$ and $\|\tilde{u}_y\|<\eta.$
\item $dist_{C^0}(h,g)<\alpha.$
\item $dist_{C^1}(h,g)< 2(e^\gamma-1).$
\item $Dg_{(0,0)}(1,0)=(\tilde{u}_x(0,0),0)=e^{\gamma}(u_x(0,0),0)=e^{\gamma}Dh_{0,0}(1,0).$
\end{itemize}
\end{lem}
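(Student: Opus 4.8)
The plan is to realize the prescribed amplification by a \emph{localized, fibre-preserving reparametrization of the first coordinate}: I look for $g=h\circ\Phi$ with $\Phi(x,y)=(\psi(x,y),y)$, where $\psi$ is smooth, equals $x$ outside a small box $Q=(-w,w)\times(-w',w')$ (with $w,w'$ chosen so that $Q\subset B_{\delta}(0)$, and later $w\ll w'$), and satisfies
\[
\psi(0,0)=0,\qquad \partial_{x}\psi(0,0)=e^{\gamma},\qquad \partial_{y}\psi(0,0)=0,\qquad 1-\theta\le\partial_{x}\psi\le e^{\gamma}\ \ \text{everywhere,}
\]
for some fixed $0<\theta<\min(1,e^{\gamma}-1)$. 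Since $\partial_{x}\psi\ge1-\theta>0$, for each $y$ the map $x\mapsto\psi(x,y)$ is an increasing bijection of $\RR$, so $\Phi$ is a diffeomorphism that equals the identity off $Q$. Consequently $g(x,y)=\bigl(u(\psi(x,y),y),\,v(y)\bigr)$ has the required skew form with $\tilde u(x,y):=u(\psi(x,y),y)$, leaves $v$ unchanged, fixes the origin ($g(0,0)=(u(0,0),v(0))=(0,0)$), and coincides with $h$ off $Q$, hence off $B_{\delta}(0)$. The fourth bullet is then immediate: $Dg_{(0,0)}=Dh_{(0,0)}\,D\Phi_{(0,0)}$ and $D\Phi_{(0,0)}(1,0)=(\partial_{x}\psi(0,0),0)=(e^{\gamma},0)$, so $Dg_{(0,0)}(1,0)=e^{\gamma}Dh_{(0,0)}(1,0)=(e^{\gamma}u_{x}(0,0),0)=(\tilde u_{x}(0,0),0)$. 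Also $g$ is a local diffeomorphism, since $\det Dg=\tilde u_{x}\,v_{y}=u_{x}(\psi(x,y),y)\,\partial_{x}\psi\cdot v_{y}\ne0$ by $u_{x}>0$, $\partial_{x}\psi>0$, $|v_{y}|>\sigma$.

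Next I would check the first three bullets, using that $\psi$ may be taken \emph{as $C^{1}$-close to the identity as we please}. From $0<u_{x}\le e^{-\lambda}$ and $0<\partial_{x}\psi\le e^{\gamma}$ we get $0<\tilde u_{x}=u_{x}(\psi(x,y),y)\,\partial_{x}\psi\le e^{-\lambda}e^{\gamma}=e^{\gamma-\lambda}$. For everything else, note that $Dg-Dh$ has vanishing second row and that at $(x,y)$ its first row is
\[
\left(\ u_{x}(\psi,y)(\partial_{x}\psi-1)+\bigl(u_{x}(\psi,y)-u_{x}(x,y)\bigr)\ ,\ \ u_{x}(\psi,y)\,\partial_{y}\psi+\bigl(u_{y}(\psi,y)-u_{y}(x,y)\bigr)\ \right),
\]
where $u_{x}(\psi,y)$ abbreviates $u_{x}(\psi(x,y),y)$. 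The first entry has modulus at most $e^{-\lambda}|\partial_{x}\psi-1|+\omega_{u_{x}}(\|\psi-\mathrm{id}\|_{C^{0}})\le e^{-\lambda}(e^{\gamma}-1)+\omega_{u_{x}}(\|\psi-\mathrm{id}\|_{C^{0}})$, $\omega_{u_{x}}$ being a modulus of continuity of $u_{x}$ near $0$ and $|\partial_{x}\psi-1|\le e^{\gamma}-1$ holding by the construction below. Since $-\lambda<\beta$ and $e^{\beta}<2$, we have $e^{-\lambda}(e^{\gamma}-1)<e^{\beta}(e^{\gamma}-1)<2(e^{\gamma}-1)$, with positive slack $(2-e^{\beta})(e^{\gamma}-1)$; the second entry is at most $e^{-\lambda}\|\partial_{y}\psi\|_{C^{0}}+\omega_{u_{y}}(\|\psi-\mathrm{id}\|_{C^{0}})$. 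Shrinking $Q$ so that $\omega_{u_{x}}(\|\psi-\mathrm{id}\|_{C^{0}})$, $\|\partial_{y}\psi\|_{C^{0}}$ and $\omega_{u_{y}}(\|\psi-\mathrm{id}\|_{C^{0}})$ are all much smaller than that slack forces $\sup\|Dg-Dh\|<2(e^{\gamma}-1)$, i.e. $dist_{C^{1}}(h,g)<2(e^{\gamma}-1)$, and also $\|\tilde u_{y}\|\le\|u_{y}\|+e^{-\lambda}\|\partial_{y}\psi\|_{C^{0}}<\eta$ since $\|u_{y}\|<\eta$ strictly; together with $\tilde u_{x}\le e^{\gamma-\lambda}$ this gives the first bullet. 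Finally $dist_{C^{0}}(h,g)=\sup\|h(\psi(x,y),y)-h(x,y)\|$ is at most $\|\psi-\mathrm{id}\|_{C^{0}}$ times a local bound for $\|Dh\|$, hence $<\alpha$ after shrinking $Q$ once more (and if $dist_{C^{1}}$ is understood as $dist_{C^{0}}+\sup\|Dg-Dh\|$, this last term is absorbed in the same way).

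It remains to construct $\psi$, and this is the only delicate point. I take $\psi(x,y)=x+\chi(y)\bigl(\Psi(x)-x\bigr)$, with $\chi$ a smooth bump, $\chi(0)=1$, $0\le\chi\le1$, $\mathrm{supp}\,\chi\subset(-w',w')$, and $\Psi$ a smooth increasing diffeomorphism of $\RR$ with $\Psi(0)=0$, $\Psi'(0)=e^{\gamma}$, $\Psi=\mathrm{id}$ outside $(-w,w)$, $\Psi'\in[1-\theta,e^{\gamma}]$. Such a $\Psi$ exists: on $(0,w)$ let $\Psi'$ be a narrow spike of height $e^{\gamma}$ near $0$ balanced by a broad, very shallow part slightly below $1$, chosen so that $\int_{0}^{w}(\Psi'-1)=0$ (so $\Psi$ returns to the identity at $w$), and symmetrically on $(-w,0)$; then $\Psi'>0$ and $|\Psi'-1|\le e^{\gamma}-1$. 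With this choice $\partial_{x}\psi=1+\chi(y)(\Psi'(x)-1)\in[1-\theta,e^{\gamma}]$ and $|\partial_{x}\psi-1|\le e^{\gamma}-1$, while $\partial_{y}\psi=\chi'(y)(\Psi(x)-x)$, so $\|\partial_{y}\psi\|_{C^{0}}\le\|\chi'\|_{C^{0}}\sup|\Psi-\mathrm{id}|$ and $\|\psi-\mathrm{id}\|_{C^{0}}\le\sup|\Psi-\mathrm{id}|\le(e^{\gamma}-1)w$. Choosing $w'$ small and then $w\ll w'$ (so $w\|\chi'\|_{C^{0}}$ is tiny although $\|\chi'\|_{C^{0}}\sim1/w'$) makes $\|\partial_{y}\psi\|_{C^{0}}$ and $\|\psi-\mathrm{id}\|_{C^{0}}$ as small as demanded in the previous paragraph. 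The main obstacle — and the reason the hypotheses single out $e^{\beta}<2$ and $\lambda>-\beta$ — is precisely the tension used above: the requirement $dist_{C^{1}}(h,g)<2(e^{\gamma}-1)$ caps the $C^{1}$-size of the allowed perturbation at essentially $2(e^{\gamma}-1)$, yet the \emph{full} factor $e^{\gamma}$ must appear in $\partial_{x}\psi$ at the origin; the resolution is to pour all of the $C^{1}$-variation into the $\partial_{x}$-component, where $u_{x}\le e^{-\lambda}<e^{\beta}<2$ leaves exactly enough room ($e^{-\lambda}(e^{\gamma}-1)<2(e^{\gamma}-1)$), while keeping the transverse derivative $\partial_{y}\psi$ and the displacement $\psi-\mathrm{id}$ negligible by the anisotropic, multi-scale choice of $\chi$ and $\Psi$.
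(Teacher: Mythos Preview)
Your argument is correct, and it proceeds along a genuinely different line from the paper's proof. The paper perturbs \emph{additively}: it sets
\[
g(x,y)=h(x,y)+\bigl(Z(y)\,\beta_k(x^{2})\,u_{x}(0,0)\,x,\ 0\bigr),
\]
where $Z$ is a standard bump in $y$ and $\beta_k$ is produced by an auxiliary sublemma so that $\beta_k(0)=e^{\gamma}-1$, $\mathrm{supp}\,\beta_k\subset[0,k]$ and $-k\le\beta_k'(t)t\le0$; the last inequality is exactly what controls the extra contribution to $\tilde u_{x}$ and yields the bound $\|\tilde u_{x}\|\le e^{\gamma-\lambda}$ directly. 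You instead perturb \emph{compositionally}, writing $g=h\circ\Phi$ with $\Phi(x,y)=(\psi(x,y),y)$ a fibre-preserving reparametrization whose $x$-derivative hits $e^{\gamma}$ at the origin while staying in $[1-\theta,e^{\gamma}]$. Your approach is more geometric and makes the preservation of the skew form $(\tilde u(x,y),v(y))$ automatic; the price is that the $C^{1}$-estimate picks up moduli-of-continuity terms $\omega_{u_{x}},\omega_{u_{y}}$ coming from evaluating $u_{x},u_{y}$ at the displaced point $(\psi(x,y),y)$, which you then kill by the anisotropic scaling $w\ll w'$. The paper's additive scheme avoids those terms entirely (the derivatives of $u$ enter only at the fixed argument $(x,y)$), at the cost of invoking the tailor-made function $\beta_k$. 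Both routes use the hypotheses $e^{\beta}<2$ and $\lambda>-\beta$ in the same place and for the same reason: they give $e^{-\lambda}(e^{\gamma}-1)<2(e^{\gamma}-1)$, which is the slack needed for $dist_{C^{1}}(h,g)<2(e^{\gamma}-1)$.
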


\begin{proof}
Let $Z:\R\rightarrow\R$ be a $C^{\infty}$ bump function such
that $Z(0)=1$, $supp[Z]\subset (-\frac{\delta_1}{2},\frac{\delta_1}{2})$ (where $supp[Z]$ is the support of $Z$) and
$|Z'(t)|<\frac{4}{\delta_1}$ where $\delta_1=\delta/2.$

We need the following auxiliary lemma (See Lemma 2.0.1 of \cite{PaS})

\noindent\textbf{Sublemma:}\textit{
For all $k>0$ arbitrarily small and $\gamma>0$ there exist a function
$\beta_k:[0,+\infty)\rightarrow\R$ such that:
\begin{enumerate}
\item $\beta_k$ is $C^{\infty}$, non-increasing and such that $-k
\leq \beta_k'(t)t \leq 0$.
\item $\beta_k$ is supported in $[0.k]$, i.e. $supp[\beta_k]\subset [0,k].$
\item $\beta(0)=e^{\gamma}-1>0.$
\end{enumerate}}

Define $$g(x,y)=h(x,y)+(Z(y)\beta(x^2)u_x(0,0)x,0).$$
Notice that $\tilde{u}(x,y)=u(x,y)+Z(y)\beta(x^2)u_x(0,0)x.$
Let's see that the conditions are fulfilled if $k$ in the above sublemma was chosen small enough. It is immediately that $g$ and $h$ coincides outside a ball of radius $\delta$ at the origin.

Notice that $\|g(x,y)-h(x,y)\|\le \beta(0)e^\beta k<\alpha$ if $k$ is small enough. Now, $Dg=Dh+ A$ with $A=\left(\begin{array}{cc}
a_1 & a_2\\0 & 0
\end{array}\right)$ and
$$a_1=Z(y)[\beta(x^2)u_x(0,0)+2\beta'(x^2)x^2u_x(0,0)]\,\,\,\,\mbox{ and }\,\,\,\, a_2=Z'(y)\beta(x^2)u_x(0,0)x.$$

Then, $\|a_1\|\le \beta(0)e^{-\la}+2ke^{-\la}<2\beta(0)$ if $k$ is small. Observe that $\beta(0)=e^{\gamma}-1.$
On the other hand $\|a_2\|\le \frac{4}{\delta_1}\beta(0)e^{-\la} k<\beta(0)$ again if $k$ is small enough.

Moreover $\tilde{u}_x(0,0)=u_x(0,0)+\beta(0)u_x(0,0)=e^{\gamma}u_x(0,0).$ Finally, since $u_x>0$ and $-k\le\beta'(t)t\le 0$ we have that $\tilde{u}_x>0$ if $k$ is small and
\begin{eqnarray*}
\|\tilde{u}_x\|& \le &\|u_x\|+\beta(0)\|u_x(0,0)\| \le  e^{-\la}(1+\beta(0))=e^{\gamma-\la}
\end{eqnarray*}
\end{proof}

Let's continue with the proof of Theorem \ref{t.example}. The idea is to begin with a linear  Anosov and then perform a sequence of perturbations (along periodic points) that converge in the $C^1$ topology to the desired diffeomorphism.

Start with the linear Anosov diffeomorphism $f_0:\TT^2\to\TT^2$ given by the matrix $A=\left(\begin{array}{cc} 2&1\\1&1
\end{array}\right).$ Let $e^{-\la}$ and $e^\mu$ be the eigenvalues of $A$ where $\la$ and $\mu$ are positive and let $E^s, E^u$ be the associated subspaces. Everything we do in local coordinates will be referred to the decomposition $E^s\oplus E^u$ (both in the tangent space as in $\TT^2$). Choose $1<\sigma<e^\mu$ and then set $\beta>0, \eta>0$ and $\sigma_1$ from the first lemma (and $\rho$ as well).

Choose a sequence $\ep_n, n\ge 1$ such that $\sum_{n\ge 1}\ep_n<\beta.$ Now, set a sequence of positive numbers $\la_n, n\ge 0$ as follows: $\la_0=\la$ and for $n\ge 1:$
$$\frac{\la_{n-1}}{2}<\la_n<\frac{\la_{n-1}}{2}+\ep_n.$$ Let $\tilde{\la}_n,n\ge 0$ be a sequence such that
$$\tilde{\la_0}=\la_0\,\,\,\,\,\mbox{ and }\,\,\,\,\,\frac{\la_{n-1}}{2}<\la_n<\tilde{\la}_n<\frac{\la_{n-1}}{2}+\ep_n\,\,\,\mbox{ for }n\ge 1.$$
Set $\gamma_n=\tilde{\la}_n-\la_{n+1}, n\ge 0$  and observe that $\sum_{n\ge 0}\gamma_n<\la_0+\sum\ep_n<\infty$ and that $\sum(\tilde{\la_n}-\la_n)<\sum\ep_n<\beta.$

Now, let's begin our induction process. Let $p=p_0$ the fixed point of $f_0.$ Applying Lemma \ref{l.pert} around the fixed point $p_0$ with $\la=\la_0$ and $\gamma=\gamma_0$ ($\delta$ and $\alpha$ do not matter too much here), we get a diffeomorphism $f_1:\TT^2\to\TT^2$ where  the stable foliation (tangent to $E^s$)  is kept invariant, $p_0$ is fixed by $f_1$, and \begin{itemize}
\item The Lyapunov exponent of $f_1$ at $p_0$ is $L^s(p_0,f_1)=-\la_0+\gamma_0=-\la_1.$ That is $\|Df_{1/E^s}\|=e^{-\la_1}.$
\item $dist_{C^1}(f_0,f_1)\le 2\left(e^{\gamma_0}-1\right).$
\item $\|Df_{1/E^s_x}\|\le e^{\gamma_0-\lambda_0}\le e^{\la_1}<1<e^\beta.$ for all $x\in\TT^2.$
\item $f_0$ and $f_1$ coincides outside a $\delta$-neighborhood of $p_0.$
\item $f_1$ has a dominated splitting (by Lemma \ref{l.dom}) with expanding direction in the cone $\cC^u_{\rho}$
\item $f_1$ is Anosov.
\end{itemize}
The true induction process starts here: pick a periodic point $p_1$ of $f_1$ with large period $n_1$ that spends much of its time near $p_0$ so that its (stable) Lyapunov exponent is $L^s(p_1)=\frac{1}{n_1}\log\|Df_{1/E^s_{p_1}}\|=-\tilde{\la}_1$ for some $\tilde{\la}_1$ where $\la_1<\tilde{\la}_1<\la_0/2+\ep_1.$

Now, pick $\delta>0$ such that $f_1^j(B_\delta(f^i(p_1))), j=0,...,n_1-1$ are disjoint for any $i=0,...,n-1$ and that $p_0$ does not belong to $B_\delta(\cO(p_1))=\cup_{i=0}^{n_1-1} B_\delta(f^i(p_1)).$ Let $\alpha_1>0$ be such that if $dist_{C^0}(f_1,g)\le 2\alpha_1$ then $g^j(B_\delta(f^i(p_1))), j=0,...,n_1-1$ are disjoint for any $i=0,...,n-1$.

Putting local coordinates at $f^i(p_1)$ and $f^{i+1}(p_1)$ for $i=0,...,n_1-1$ we perform a perturbation using Lemma \ref{l.pert} with $\la=\la_1, \gamma=\gamma_1$  and we get $f_2:\TT^2\to\TT^2$ such that
\begin{itemize}
\item $f_2^j(p_1)=f_1^j(p_1), j\ge 0.$
\item The stable foliation (tangent to $E^s$) is kept invariant.
\item $L^s(p_1,f_2)=\gamma_1+L^s(p_1,f_1)=\gamma_1-\tilde{\la}_1=-\la_2.$
\item $\|Df_{2/E^s_x}\|\le e^{\gamma_1+\gamma_0-\la_0}<e^{\sum \ep_n}<e^\beta.$
\item $dist_{C^0}(f_1,f_2)<\alpha_1.$
\item $dist_{C^1}(f_1,f_2)\le 2\left(e^{\gamma_1}-1\right).$
\item $f_2$ has dominated splitting (by Lemmas \ref{l.dom} and \ref{l.pert}) with expanding direction in the cone $\cC^u_{\rho}$
\item $f_2$ is Anosov.
\end{itemize}

The last point deserves a bit explanation (since at some point in the process the inequality $\|Df_{{n+1}/E^s_x}\|\le e^{\gamma_n+...+\gamma_1+\gamma_0-\la_0}$ does not ensure that $\|Df_{{n+1}/E^s_x}\|<1$ although is less than $e^\beta$ enough to guarantee the domination). The general argument is as follows: since $f_1$ is Anosov, we know that for some $m>0$ we have $\|Df^m_{1/E^s_x}\|<1$ for any $x;$ on the other hand $p_1$ is a periodic point of period $n_1$ with $L^s(p_1,f_1)=-\la_1$ and so, for some neighborhood $U$ of the orbit of $p_1$ we have that $\|Df^{n_1}_{1/E^s_x}\|\le e^{(-\tilde{\la}_1+\ep)n_1}$ where $\ep$ is such that $-\la_2+\ep<0:$ finally perform the perturbation in a tiny neighborhood of the orbit of $p_1$ such that any point outside $U$ is outside the support of the perturbation in the next $m$ iterates. The above implies that $f_2$ is Anosov, since if $x\in U$ then $\|Df^{n_1}_{2/E^s_x}\|\le e^{(-\la_2+\ep)n_1}$ and if $x\notin U$ the $\|Df^m_{2/E^s_x}\|=\|Df^m_{1/E^s_x}\|<1.$

One last remark on how is chosen the $\alpha_n=dist_{C^0}(f_n,f_{n+1})$ (where $\alpha_n$ plays the role for $f_n$ as $\alpha_1$ with $f_1$): $\sum_{n> m}\alpha_n\le \alpha_m.$

Therefore, inductively we have a sequence $f_n$ of Anosov diffeomorphism on $\TT^2$ such that $dist_{C^1}(f_n,f_{n+1})\le 2\left(e^{\gamma_n}-1\right),\, dist_{C^0}(f_n,f_{n+1})\le \alpha_n$, and $f_n$ has a periodic point $p_n$ whose stable Lyapunov exponent is $-\la_n$. The sequence $f_n$ is also uniformly dominated (i.e same cones and estimates) and the foliation tangent to $E^s_A$ is kept invariant. Since $\sum\gamma_n<\infty$ we have that $\{f_n\}$ is a $C^1$-Cauchy sequence and thus converges to a $C^1$ diffeomorphism $g$, having a dominated splitting on $\TT^2.$  The diffeomorphism $g$ is not Anosov since $g$ has a sequence $p_n$ of periodic points whose (stable) lyapunov exponent converges to zero. Every periodic point of $g$ is hyperbolic: let $q$ be periodic of period $k,$ then for $n\ge k$ we have that $q$ is disjoint of the support of perturbation of $f_n$ (since $d_{C^0}(g,f_n)\le\alpha_n)$ and so $g=f_k$ along the orbit of $q$ and so it is hyperbolic. It is not difficult to see that $g$ is expansive and so it is conjugated to Anosov \cite{Le} (or see that the lift of $g$ to $\RR^2$ has infinity as  expansivity constant). This concludes the proof of Theorem \ref{t.example}.

\begin{rem}\label{r.example}
The above example does not satisfy certain $C^1$ generic conditions. For instance, it has no periodic attractor but by an arbitrarily small perturbation one can create them.

Notice also that by multiplying this example with a strong contraction we have a diffeomorphism on a manifold having a homoclinic class with dominated splitting which is not hyperbolic and by perturbation we can not have a periodic point in the class of different index! (recall Problem \ref{prob.index}). However, the homoclinic class can be perturbed to be hyperbolic
 Besides, it also gives an example of a homoclinic class $H(p)$ with dominated splitting $T_{H(p)}M=E^s\oplus E^c\oplus E^u$ where $E^c$ is not contracted but no periodic point in the class has index $dim E^s$ (see Problem \ref{prob.stableindex}). Taking into the account the results of this section, one may ask: \textit{does smoothness has a role to play in Problems \ref{prob.stableindex} and \ref{prob.aperiodic}?}

\end{rem}

\end{document}